
\documentclass[journal,twocolumn,10pt]{IEEEtran}
\usepackage{setspace}
\usepackage{amsmath,amssymb,epsfig,amsthm}
\usepackage[ruled,lined,linesnumbered]{algorithm2e}
\usepackage{algorithmic,multirow,array,graphicx,url,isolatin1}

\newtheorem{prop}{Proposition}


\DeclareMathOperator*{\prox}{prox}
\newcommand{\bzeta}{\mbox{\boldmath $\zeta$}}
\newcommand{\bone}{\mbox{\boldmath $1$}}
\newcommand{\bzero}{\mbox{\boldmath $0$}}

\newcommand{\bUpsilon}{\mbox{\boldmath $\Upsilon$}}
\newcommand{\bDelta}{\mbox{\boldmath $\Delta$}}
\newcommand{\bLambda}{\mbox{\boldmath $\Lambda$}}
\newcommand{\bdelta}{\mbox{\boldmath $\delta$}}

\title{\huge Deconvolving Images with Unknown Boundaries \\ Using the Alternating  Direction Method of Multipliers}
%

\author{Mariana S. C. Almeida \ \  {\rm and} \ \   M\'{a}rio A. T. Figueiredo, \textit{Fellow, IEEE}\thanks{
Both authors are with the Ins\-ti\-tu\-to
de Telecomunica\c{c}\~{o}es (IT), Instituto Superior T\'{e}cnico (IST), 1049-001 Lisboa, Portugal.
Email: {\tt $\{$mariana.almeida,mario.figueiredo$\}$@lx.it.pt}

This work was partially supported by {\it Funda\c{c}\~{a}o para a Ci\^{e}ncia e Tecnologia} (FCT), under grants PTDC\-/EEA-TEL\-/104515\-/2008 and
PEst-OE/EEI/LA0008/2011, and the fellowship SFRH\-/BPD/\-69344/2010.}}

%
%

\begin{document}

\maketitle

\begin{abstract}
The \emph{alternating direction method of multipliers} (ADMM) has recently
sparked interest as a flexible and efficient optimization tool for imaging inverse
problems, namely deconvolution and reconstruction under non-smooth convex
regularization. ADMM achieves state-of-the-art speed by
adopting a \emph{divide and conquer}
strategy, wherein a hard problem is split into simpler, efficiently solvable
sub-problems ({\it e.g.}, using fast Fourier or wavelet transforms, or
simple proximity operators). In deconvolution, one of these
sub-problems involves a matrix inversion ({\it i.e.}, solving a linear system),
which can be done efficiently (in the discrete Fourier domain) if
the observation operator is circulant, {\it i.e.}, under periodic boundary
conditions. This paper extends ADMM-based image deconvolution to the more
realistic scenario of unknown boundary, where the observation operator
is modeled as the composition of a convolution (with arbitrary boundary
conditions) with a spatial mask that keeps only pixels that do not depend
on the unknown boundary. The proposed approach also handles,
at no extra cost, problems that combine the recovery of missing
pixels ({\it i.e.}, inpainting) with deconvolution.
We show that the resulting algorithms inherit
the convergence guarantees of ADMM and illustrate its performance on
non-periodic deblurring (with and without inpainting of interior pixels)
under total-variation and frame-based regularization.

\end{abstract}

\begin{IEEEkeywords}
Image deconvolution, alternating direction method of multipliers (ADMM),
boundary conditions, non-periodic deconvolution, inpainting, total variation, frames.
\end{IEEEkeywords}

\section{Introduction}
\label{sec:intro}


The \emph{alternating direction method of multipliers} (ADMM), originally
proposed in the 1970's \cite{Gabay_ADMM_76}, emerged recently as a
flexible and efficient tool for several imaging inverse problems, such as denoising
\cite{FigBioucas_10,Steidl_Denoising_10}, deblurring \cite{AfonsoFig_TIP10}, inpainting \cite{AfonsoFig_TIP10},
reconstruction \cite{Ramani}, motion segmentation \cite{Zappella}, to mention only a few classical
problems (for a comprehensive review, see \cite{Boyd_ADMM_11}). ADMM-based approaches make use of variable splitting, which allows a straightforward
treatment of various priors/regularizers \cite{AfonsoICIP2010}, such as those based on
frames or on {\it total-variation} (TV) \cite{ROF}, as well as the seamless
inclusion of several types of constraints ({\it e.g.}, positivity)
\cite{FigBioucas_10,Steidl_Denoising_10}. ADMM is closely related to other
techniques, namely the so-called {\it Bregman} and {\it split Bregman} methods
\cite{Cai09,Goldstein_Stanley_SBL1_2009,YinOsher2008} and  {\it Douglas-Rachford
splitting} \cite{Boyd_ADMM_11,CombettesPesquet2007,Eckstein92,Esser09}.

Several ADMM-based algorithms for imaging inverse problems require, at each iteration,
solving a linear system (equivalently, inverting a
matrix) \cite{AfonsoFig_TIP10,Cai09,FigBioucas_10,Pustelnik2012,Tao}.
This fact is simultaneously a blessing and a curse. On the one hand, the matrix to
be inverted is related to the Hessian of the objective function, thus
carrying second order information; arguably, this fact
justifies the excellent speed of these methods, which have
been shown (see, {\it e.g.}, \cite{AfonsoFig_TIP10}) to be considerably faster
than the classical {\it iterative shrinkage-thresholding}  (IST)
algorithms \cite{Daubechies04,FigueiredoNowak2003} and even than
their accelerated versions \cite{FISTA,TwIST,SpaRSA}.
On the other hand, this inversion (due to its typically huge size)
limits its applicability to problems where it can be efficiently computed
(by exploiting some particular structure).
For ADMM-based image deconvolution algorithms \cite{AfonsoFig_TIP10,Cai09,FigBioucas_10,Tao},
this inversion can be efficiently carried out using the {\it fast
Fourier transform} (FFT), if the convolution is cyclic/periodic
(or assumed to be so), thus diagonal in the discrete Fourier
domain. However, as explained next, periodicity is an unnatural
assumption, inadequate for most real imaging problems.

In deconvolution, the pixels located near the boundary of
the observed image depend on pixels (of the unknown image) located
outside of its domain. The typical way to formalize this
issue is to adopt a so-called
{\it boundary condition} (BC).
\begin{itemize}
\item
The {\bf periodic} BC (the use of which, in image deconvolution,
dates back to the 1970s \cite{AndrewsHunt}) assumes a periodic
convolution; its matrix representation is circulant\footnote{In fact,
when dealing with 2-dimensional (2D) images,
these matrices are block-circulant with circulant blocks,
thus diagonalized by the 2D {\it discrete Fourier transform} (DFT).
For simplicity, we refer to such matrices simply as
circulant and to the 2D DFT simply as DFT.},
\addtocounter{footnote}{-1} diagonalized by the
DFT\footnotemark, which can be implemented via the FFT.
This computational convenience makes it, arguably, the most commonly adopted BC.

\item
The {\bf zero} BC assumes that all the external pixels have zero
value, thus the matrix representing the convolution is
block-Toeplitz, with Toeplitz blocks \cite{Ng_book}. By analogy with the BC for ordinary or
partial differential equations that assumes fixed values at the domain
boundary, this is commonly referred to Dirichlet BC \cite{Ng_book}.

\item
In the {\bf reflexive} and {\bf anti-reflexive} BCs, the pixels outside the
image domain are a reflection of those near the boundary, using even or odd
symmetry, respectively \cite{Chan2005,Donatelli_BC_06,NgChanTang1999}.
In the reflexive BC, the discrete derivative at the boundary is zero; thus, by analogy
with the BC for ordinary or partial differential equations  that assumes
fixed values of the derivative at the boundary, the reflexive BC
is often referred to as Neumann BC \cite{Ng_book}.
\end{itemize}

\begin{figure}
		\centering
	 \begin{tabular}{c   c   c} 
  \includegraphics[width=0.31\columnwidth]{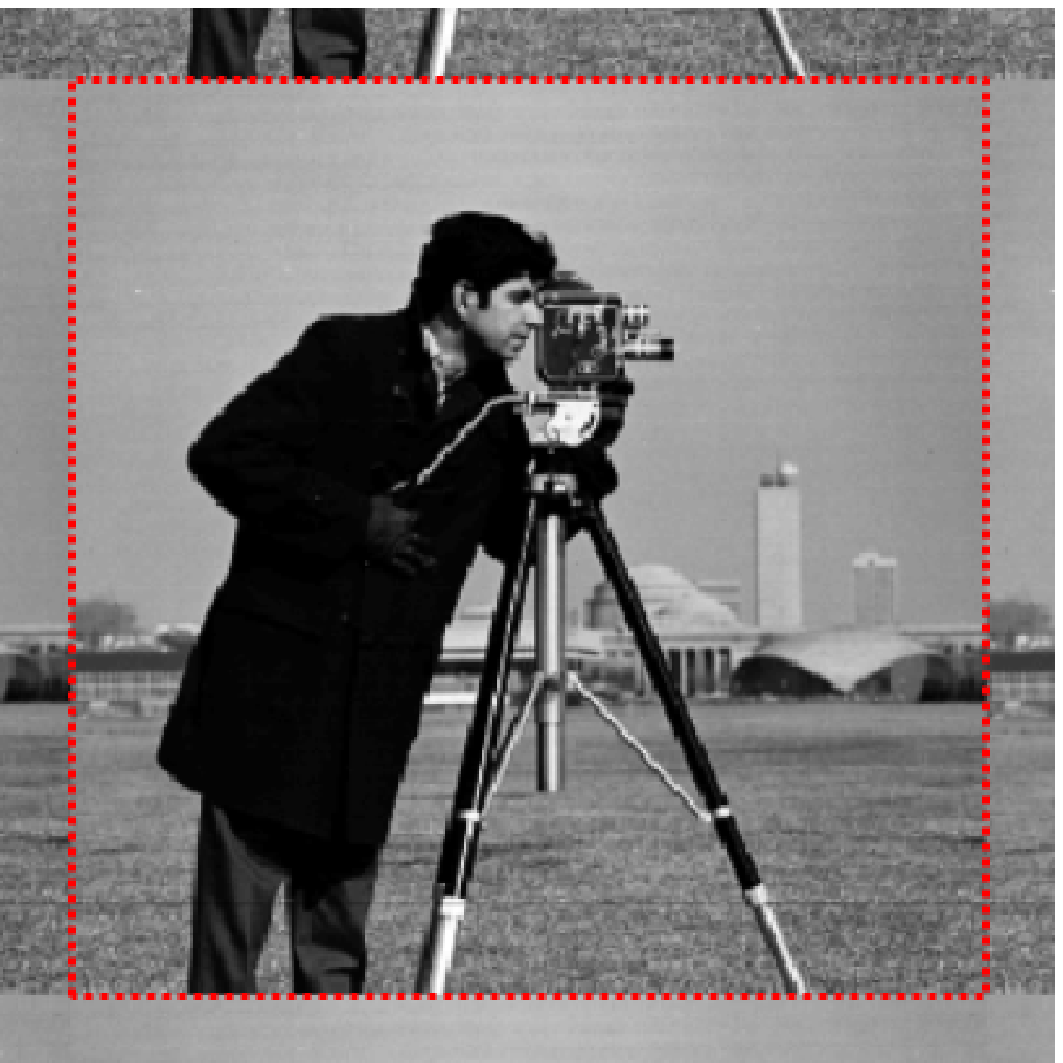}
   \includegraphics[width=0.31\columnwidth]{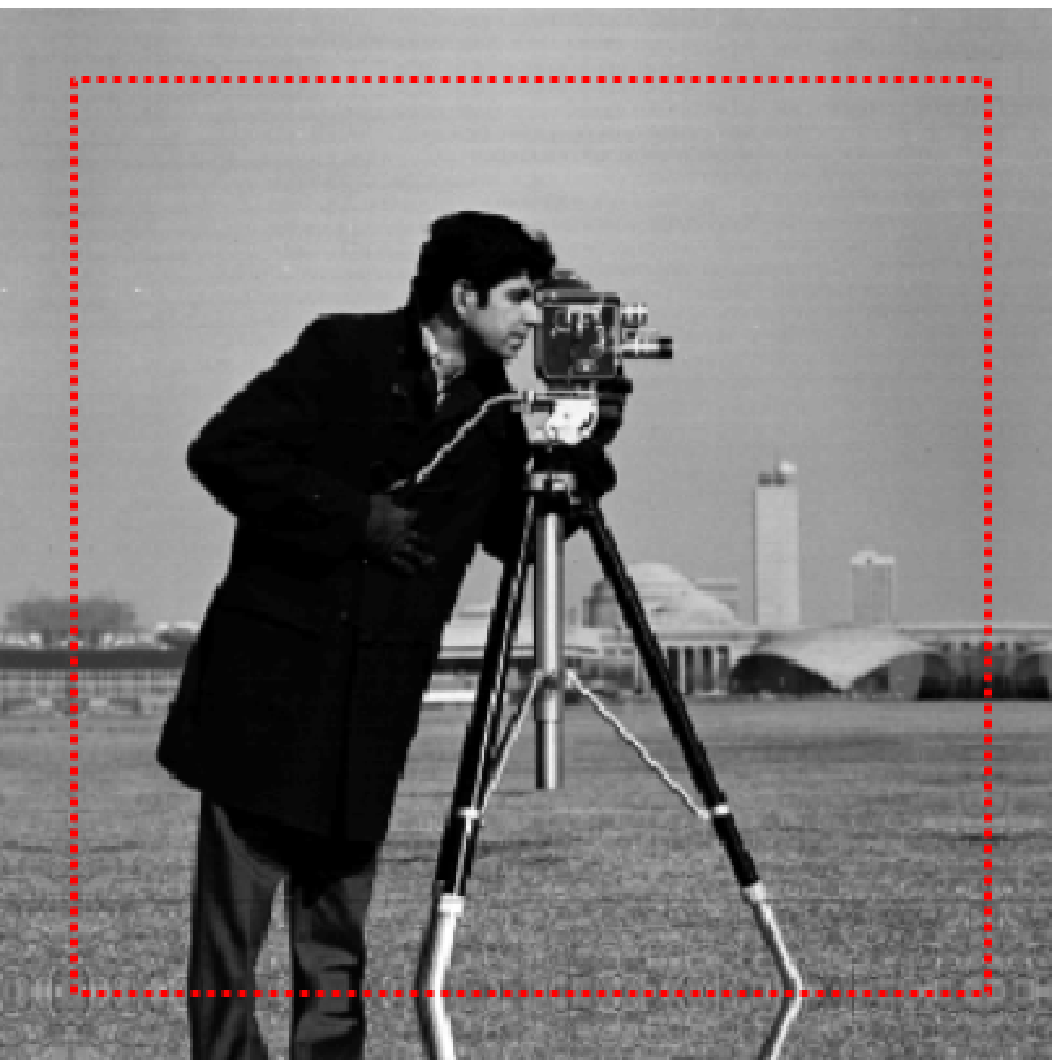}
   \includegraphics[width=0.31\columnwidth]{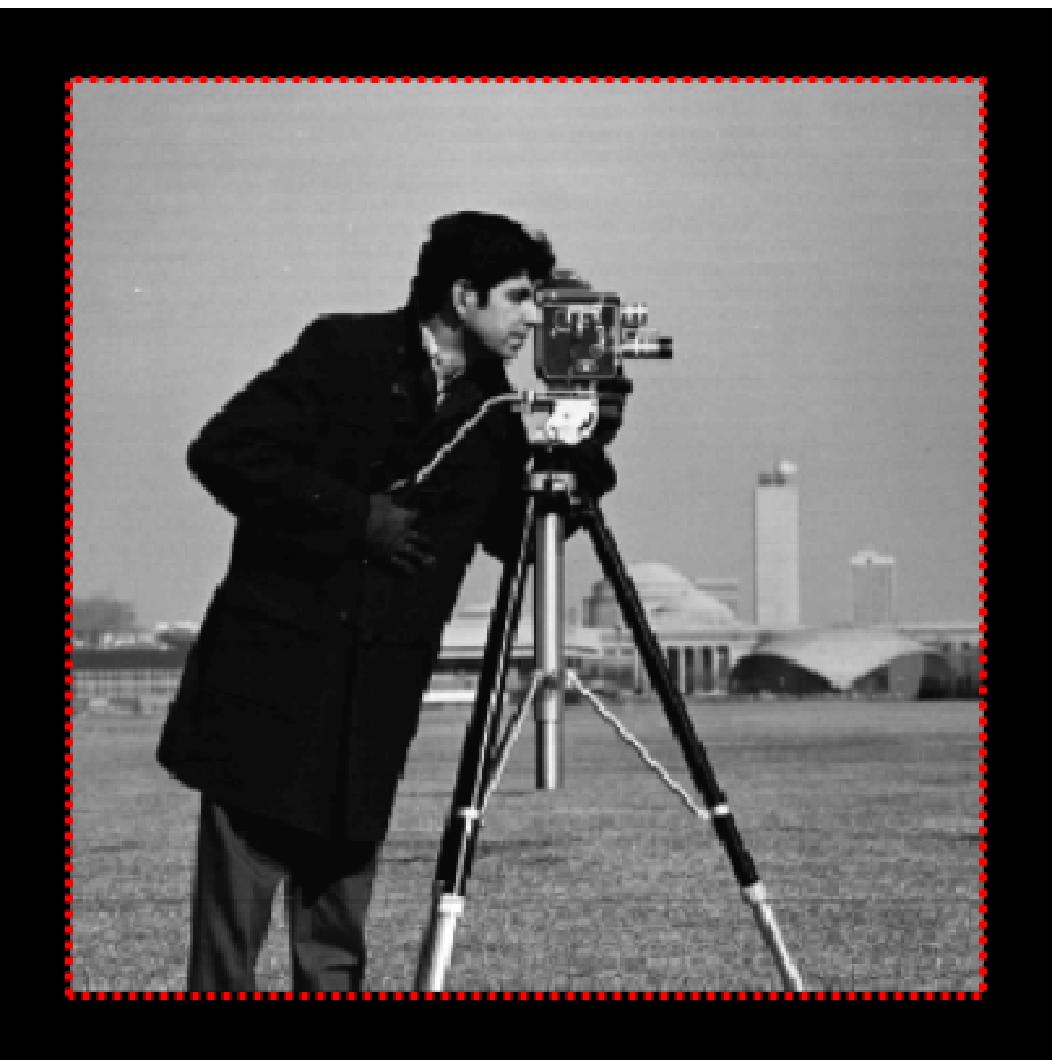}
    \end{tabular}
\caption{Illustration of the (unnatural) assumptions underlying the periodic, reflexive, and
zero boundary conditions.}
\label{fig:boundaries}
\end{figure}

As illustrated in Figure~\ref{fig:boundaries}, all these standard BCs
are quite unnatural, as they do not correspond to any realistic imaging
system, being motivated merely by computational convenience.
Namely, assuming a periodic boundary condition has the advantage of allowing a very fast
implementation of the convolution as a point-wise multiplication in the
DFT domain, efficiently implementable using the FFT.
However, in real problems, there is no reason for the external (unobserved)
pixels to follow periodic (or any other) boundary conditions. A well known consequence
of this mismatch is a degradation of the deconvolved images, such as the appearance
of ringing artifacts emanating from the boundaries
(see Figs. \ref{fig:Lena_uniform} and \ref{fig:Cameraman_motion}).
These artifacts can be reduced by pre-processing the image to reduce
the spurious discontinuities at the boundaries, created by the (wrong)
periodicity assumption; this is what is done by the ``edgetaper" function in
the {\it MATLAB Image Processing Toolbox}. In a more sophisticated version
of this idea that was recently proposed, the observed image is extrapolated
to create a larger image with smooth BCs \cite{LiuJia_BC_08}.

Convolutions under zero BC can still be efficiently performed in the DFT domain,
by embedding (using zero-padding) the non-periodic convolution
into a larger periodic one \cite{Ng_book}. However,
in addition to the mismatch problem pointed out in the previous paragraph
({\it i.e.}, there is usually no reason to assume that the boundary is zero),
this choice has another drawback: in the context of ADMM-based methods,
the zero-padding technique prevents the required matrix inversion
from being efficiently computed via the FFT \cite{AfonsoFig_NonCyclic_11}.

\subsection{Related Work and Contributions}
\label{ssec:contrib}
Instead of adopting a standard BC or a boundary
smoothing scheme, a  more realistic model of actual
imaging systems treats the external boundary pixels as unknown;
{\it i.e.}, the problem is seen as one of simultaneous
deconvolution and inpainting, where the unobserved boundary pixels
are estimated together with the deconvolved image.
The corresponding observation model is the composition of a spatial mask
with a convolution under arbitrary BC \cite{Chan2005,MatakosRamaniFessler_2012,Reeves05,Sorel12};
addressing this formulation using ADMM is the central theme of this paper.
While we were concluding this manuscript, we became aware of very recent
related work in \cite{MatakosRamaniFessler_2013}.

Under quadratic (Tikhonov) regularization, image deconvolution with
periodic BC corresponds to a linear system, where the corresponding matrix
can be efficiently inverted in the DFT domain using the FFT.
In the unknown boundary case, it was shown in \cite{Reeves05}
how this inversion can be reduced to an FFT-based inversion followed
by the solution (via, {\it e.g.}, {\it conjugate gradient} -- CG) of
a much smaller system (same dimension as the unknown boundary).
Very recently, \cite{Sorel12} adapted this technique to
deconvolution under TV and frame-based analysis non-smooth
regularization; that work proposes an algorithm based
on variable splitting and quadratic penalization, using the method of \cite{Reeves05}
to solve the linear system at each iteration. That method is related
to, but it is not ADMM, thus has no guarantees of converge to a minimizer
of the original objective function. Although \cite{Sorel12} mentions the
possibility of using the method of \cite{Reeves05} within ADMM, that option
was not explored.

In this paper, we address image deconvolution with unknown boundaries,
under TV-based and frame-based (synthesis and analysis) non-smooth
regularization, using  ADMM. Our first and main approach
exploits the fact that the observation model involves the composition of
a spatial mask with a periodic convolution, and uses ADMM in a way that
decouples these two operations. The resulting algorithms inherit
all the desirable properties of previous ADMM-based deconvolution
methods: all the update equations (including
the matrix inversion) can be computed efficiently without using inner
iterations; convergence is formally guaranteed. The second approach considered
is the direct extension of the methods from
\cite{AfonsoFig_TIP10,Cai09,FigBioucas_10,Tao} to the unknown boundary
case; here (unlike with periodic BC), the linear system at each iteration
cannot be solved efficiently using the FFT, thus we adopt
the technique of \cite{Reeves05,Sorel12}. However, since the
resulting algorithms are instances of ADMM, they have
convergence guarantees, which are missing in \cite{Sorel12}.
Furthermore, we show how the method of \cite{Reeves05} can
also be used in ADMM for frame-based synthesis regularization,
whereas \cite{Sorel12} only considers analysis formulations.

The proposed methods are experimentally illustrated using frame-based
and TV-based regularization; the results show the
advantage of our approach over the use of the ``edgetaper'' function (in terms
of improvement in SNR) and over an adaptation of the recent strategy of \cite{Sorel12}
(in terms of speed). Finally, our approach is also tested on inverse problems where the
observation model consists of a (non-cyclic) deblurring followed by a generic loss of
pixels (inpainting problems). Arguably due to its complexity, this problems has not
been often addressed in the literature, although it is a relevant one:
consider the problem of deblurring an image in which some
pixel values are unaccessible, {\it e.g.}, because they are saturated, thus should
not be taken into account, or because they correspond to so-called {\it dead pixels} in
the image sensor.

\subsection{Outline and Notation}
\label{ssec:outline}

Sections \ref{sec:ADMM} and \ref{sec:TV_Deblurring} review the ADMM
and its use for image deconvolution with periodic BC, using frame-based
and TV-based regularization,  setting the stage for the proposed
approaches for the unknown boundary case, which are introduced in
Section \ref{sec:proposed}. The experimental
evaluation of our methods is reported in Section \ref{sec:Exp},
which also illustrates its use for simultaneous deblurring
and inpaiting. Section \ref{sec:Conc} concludes the manuscript.

We use the following notation: $\bar{\mathbb{R}} = \mathbb{R} \cup \{-\infty,+\infty\}$
is the extended real line; bold lower case denotes vectors ({\it e.g.}, ${\bf x}$, ${\bf y}$),
and bold upper case (Roman or Greek) are matrices ({\it e.g.}, ${\bf A}$, $\bUpsilon$);
the superscript  $(\cdot)^*$ denotes vector/matrix transpose, or conjugate transpose in the
complex case;  ${\bf I}$,  $\bone $, and $\bzero$ are the identity matrix  and
vectors with all elements equal to one and zero, respectively;
as usual, $\circ$ denotes function composition  ({\it e.g.}, $(f\circ{\bf A})({\bf x}) = f({\bf Ax})$),
and $\odot$ the component-wise product between vectors ($({\bf v}\odot{\bf u})_i = v_i \, u_i$).

\section{Alternating Direction Method of Multipliers}
\label{sec:ADMM}

\subsection{The Standard ADMM Algorithm}
\label{ssec:ADMM1}


Consider an unconstrained optimization problem
\begin{equation}
\min_{{\bf z} \in \mathbb{R}^n}  f(\textbf{z}) + g(\textbf{G}\textbf{z}),
\label{eq:unconstrained}
\end{equation}
where $f:\mathbb{R}^n \rightarrow \bar{\mathbb{R}}$ and $g:\mathbb{R}^p
\rightarrow \bar{\mathbb{R}}$ are convex functions, and
${\bf G}\in\mathbb{R}^{p\times n}$ is a matrix. The  ADMM algorithm
for this problem (which can be derived from different perspectives,
namely, augmented Lagrangian and Douglas-Rachford splitting)  is
presented in Algorithm \ref{alg:ADMM}; for a recent comprehensive
review, see \cite{Boyd_ADMM_11}. Convergence of ADMM was shown
in \cite{Eckstein92}, where the following theorem was proved.

\newtheorem{theorem}{Theorem}

\begin{theorem}
\label{th:Eckstein}{ Consider problem  (\ref{eq:unconstrained}), where
 ${\bf G}\in\mathbb{R}^{p\times n}$ has full column rank and
 $f\,:\,\mathbb{R}^d\rightarrow \bar{\mathbb{R}}$ and $g\,:\,\mathbb{R}^p\rightarrow
\bar{\mathbb{R}}$ are closed, proper, convex functions. Consider arbitrary  $\mu>0$,
${\bf u}_0, {\bf d}_0\in \mathbb{R}^p$.
Let $\eta_k \geq 0$ and $\rho_k \geq 0,$ for $k=0,1,...,$ be
two sequences such that $\sum_{k=0}^\infty \eta_k < \infty$
and $\sum_{k=0}^\infty \rho_k < \infty.$
Consider three sequences ${\bf z}_k \in \mathbb{R}^{n}, {\bf u}_k
\in \mathbb{R}^{p}$, and ${\bf d}_k \in \mathbb{R}^{p}$, for $k=0,1,...,$ satisfying
\begin{eqnarray}
  \left\| {\bf z}_{k+1} - \arg\min_{{\bf z}} f ({\bf z})
 + \frac{\mu}{2} \|{\bf G}{\bf z} \! - \!{\bf u}_k \! -\! {\bf d}_k\|_2^2 \right\| & \leq & \eta_k \nonumber\\
  \left\| {\bf u}_{k+1}  - \arg\min_{{\bf u}} g({\bf u})
 + \frac{\mu}{2} \|{\bf G}{\bf z}_{k+1} \! - \! {\bf u} \! - \! {\bf d}_k\|_2^2 \right\| & \leq &  \rho_k \nonumber\\
 {\bf d}_{k+1}  =  {\bf d}_{k} - ({\bf G\, z}_{k+1} - {\bf u}_{k+1}).\nonumber
\end{eqnarray}
Then, if (\ref{eq:unconstrained}) has a solution, say ${\bf z}^*$,
the sequence $\{{\bf z}_k\}$  converges to ${\bf z}^*$.
If (\ref{eq:unconstrained}) does not have a solution,  at least
one of the sequences $({\bf u}_1,  {\bf u}_2, ...)$ or $({\bf d}_1, {\bf d}_2, ...)$ diverges.}
\end{theorem}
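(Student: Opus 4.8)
The plan is to derive the result from the convergence theory of monotone operator splitting, by exhibiting the ADMM iteration in the statement of Theorem~\ref{th:Eckstein} as an \emph{inexact Douglas--Rachford (DR) splitting} applied to the dual of (\ref{eq:unconstrained}). The first step is to pass to the equivalent constrained problem
\[
\min_{\mathbf{z},\,\mathbf{u}} \; f(\mathbf{z}) + g(\mathbf{u}) \quad \text{subject to} \quad \mathbf{G}\mathbf{z} = \mathbf{u},
\]
whose Lagrangian dual is a problem of finding a zero of a sum $A+B$ of two maximal monotone operators constructed from $\partial f^{*}$, $\partial g^{*}$, and $\mathbf{G}$. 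I would then verify, by writing out the resolvents (proximity operators) of $A$ and $B$ and matching them term-by-term against the two $\argmin$ steps and the multiplier update $\mathbf{d}_{k+1} = \mathbf{d}_k - (\mathbf{G}\mathbf{z}_{k+1} - \mathbf{u}_{k+1})$, that one sweep of the exact algorithm coincides with one application of the DR operator $T_{\mu} = \tfrac12(I + R_B R_A)$, built from the reflected resolvents $R_A, R_B$, acting on the (scaled) dual variable. This is the classical Gabay/Glowinski identity; the bookkeeping of the scaling $\mu$ and the sign conventions is tedious but routine.

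The payoff of this identification is that $T_{\mu}$ is \emph{firmly nonexpansive} (equivalently, $\tfrac12$-averaged), so that the exact recursion is a proximal-point / Krasnoselskii--Mann iteration whose fixed points are in one-to-one correspondence with the Karush--Kuhn--Tucker pairs of (\ref{eq:unconstrained}). At this point I would record two uses of the \emph{full column rank} hypothesis on $\mathbf{G}$: first, it makes the quadratic term $\tfrac{\mu}{2}\|\mathbf{G}\mathbf{z}-\cdot\,\|_2^2$ strictly convex in $\mathbf{z}$, so the $\mathbf{z}$-update has a unique, well-defined minimizer; second, it guarantees that convergence of $\{\mathbf{G}\mathbf{z}_k\}$ upgrades to convergence of $\{\mathbf{z}_k\}$ itself, which is exactly what the theorem asserts.

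Next I would incorporate the inexactness. The $\mathbf{u}$-update is literally a proximity operator, $\mathbf{u}_{k+1}\approx\prox_{g/\mu}(\mathbf{G}\mathbf{z}_{k+1}-\mathbf{d}_k)$, and the $\mathbf{z}$-update is a generalized proximity step; since proximity operators are nonexpansive, the tolerances $\eta_k$ and $\rho_k$ bounding the inexact $\argmin$ steps translate into an additive perturbation $\mathbf{e}_k$ of the DR recursion, $\mathbf{v}_{k+1} = T_{\mu}\mathbf{v}_k + \mathbf{e}_k$, with $\|\mathbf{e}_k\|$ controlled by $\eta_k+\rho_k$ and hence $\sum_k\|\mathbf{e}_k\|<\infty$. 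I would then invoke the convergence theorem for \emph{inexact} iterations of firmly nonexpansive operators with summable errors (Rockafellar's inexact proximal-point lemma, in the form used by Eckstein and Bertsekas \cite{Eckstein92}): when $\mathrm{Fix}(T_\mu)\neq\varnothing$, the perturbed sequence $\{\mathbf{v}_k\}$ still converges to a fixed point. Translating back through the identification of the first paragraph yields convergence of $\{\mathbf{G}\mathbf{z}_k\}$, $\{\mathbf{u}_k\}$, and $\{\mathbf{d}_k\}$, and, via the injectivity of $\mathbf{G}$, of $\{\mathbf{z}_k\}$ to a solution $\mathbf{z}^{*}$.

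Finally, for the case where (\ref{eq:unconstrained}) has no solution, $\mathrm{Fix}(T_\mu)=\varnothing$, and I would appeal to the standard dichotomy for averaged operators: a map of this type with no fixed point cannot have a bounded orbit, so $\{\mathbf{v}_k\}$ is unbounded, forcing at least one of $\{\mathbf{u}_k\}$ or $\{\mathbf{d}_k\}$ to diverge. I expect the main obstacle to be the first paragraph: establishing the \emph{exact} ADMM/DR equivalence with all scalings and signs correct and, in tandem, checking that the inexact $\argmin$ tolerances really do produce a \emph{summable} perturbation of the $T_\mu$-orbit (rather than merely of the primal variables), since it is summability in the operator's own metric that the convergence lemma requires.
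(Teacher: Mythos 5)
The paper does not actually prove Theorem~\ref{th:Eckstein}: it imports the result verbatim from Eckstein and Bertsekas \cite{Eckstein92}, and your outline --- recasting inexact ADMM as an inexact Douglas--Rachford iteration on the dual, invoking firm nonexpansiveness and the summable-error proximal-point lemma, using full column rank of ${\bf G}$ both for well-posedness of the ${\bf z}$-update and to upgrade convergence of $\{{\bf Gz}_k\}$ to $\{{\bf z}_k\}$, and handling the no-solution case via the unbounded-orbit dichotomy --- is precisely the argument of that reference. So your proposal is correct in approach and matches the proof the paper relies on, with the one genuinely delicate point (showing the $\eta_k,\rho_k$ tolerances induce a summable perturbation of the Douglas--Rachford orbit itself) correctly identified by you as the crux, exactly as in \cite{Eckstein92}.
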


\begin{algorithm}
\DontPrintSemicolon
\caption{ADMM\label{alg:ADMM}}
Initialization: set $k=0$, choose $\mu > 0$, $\textbf{u}_0$, and $\rule[-0.25cm]{0cm}{0.6cm}\textbf{d}_0$.\;
\Repeat{stopping criterion is satisfied}{
$\rule[-0.2cm]{0cm}{0.4cm}\textbf{z}_{k+1} \leftarrow \mathop{\arg\min}_{\textbf{z}} f(\textbf{z})
														+ \frac{\mu}{2} || \textbf{ G z} - \textbf{u}_{k} - \textbf{d}_k || _2^2 $ \\
$\rule[-0.2cm]{0cm}{0.4cm}\textbf{u}_{k+1} \leftarrow \mathop{\arg\min}_{\textbf{u}} g(\textbf{u})
														+ \frac{\mu}{2} ||\textbf{ G  z}_{k+1}  - \textbf{u} - \textbf{d}_k || _2^2 $ \\
$\rule[-0.2cm]{0cm}{0.4cm} \textbf{d}_{k+1} \leftarrow \textbf{d}_k - ( \textbf{Gz}_{k+1} - \textbf{u}_{k+1}$ ) \\
$k \leftarrow k+1$
}
\end{algorithm}

There are more recent results on ADMM, namely establishing linear convergence \cite{DengYin2012};
however, those results make stronger assumptions (such as strong convexity) which are not generally
satisfied in deconvolution problems. Under the conditions of \textit{Theorem 1},
convergence holds for any $\mu>0$; however, the choice of
$\mu$ may strongly affect the convergence speed \cite{Boyd_ADMM_11}.
It is also possible to replace the scalar $\mu$ by a positive diagonal matrix $\bUpsilon
= \mbox{diag}(\mu_1,...,\mu_p)$,
{\it i.e.}, to replace the quadratic terms of the form $\mu \|{\bf G}{\bf z} - {\bf u}- {\bf d}\|_2^2$
by $({\bf G}{\bf z} - {\bf u}- {\bf d})^* \bUpsilon ({\bf G}{\bf z} - {\bf u}- {\bf d})$.

\subsection{The ADMM For Two or More Functions}
\label{ssec:ADMM2}
Following the formulation proposed in \cite{Afonso2011,FigBioucas_10}, consider a generalization
of \eqref{eq:unconstrained}, with $J \geq 2$ functions,
\begin{equation}
\mathop{\min}_{\textbf{z} \in \mathbb{R}^n}  \sum_{j=1}^{J} g_j(\textbf{H}^{(j)}\textbf{z}),
\label{eq:unconstrained2}
\end{equation}
where $g_j : \mathbb{R}^{p_j}\rightarrow \bar{\mathbb{R}}$ are proper, closed, convex functions,
and $\textbf{H}^{(j)} \in \mathbb{R}^{p_j\times n}$ are arbitrary matrices. We map this
problem into form \eqref{eq:unconstrained} as follows: let $f(\textbf{z})=0$; define matrix ${\bf G}$ as
\begin{equation}
\textbf{G} =  \begin{bmatrix} \textbf{H}^{(1)} \\ \vspace{-0.5cm} \\ \vdots \\ \ \vspace{-0.4cm}\\ \textbf{H}^{(J)}\end{bmatrix}  \in  \mathbb{R}^{p\times n},
\label{eq:G}
\end{equation}
where $p = p_1+...+p_{J}$, and let $g: \mathbb{R}^{p} \rightarrow \bar{\mathbb{R}}$ be defined as
\begin{equation}
g(\textbf{u}) = \sum_{j=1}^{J} g_j(\textbf{u}^{(j)}),
\label{eq:G2}
\end{equation}
where each $\textbf{u}^{(j)}  \in \mathbb{R}^{p_j}$ is\  a\  $p_j$-dimensional
sub-vector of ${\bf u}$, that is,  $\textbf{u} = \left[ (\textbf{u}^{(1)})^*\! , ...,  (\textbf{u}^{(J)})^* \right]^*$.

The definitions in the previous paragraph lead to an ADMM for problem \eqref{eq:unconstrained2}
with the following two key features.
\begin{enumerate}
\item The fact that $f({\bf z}) = 0$ turns line 3 of Algorithm \ref{alg:ADMM}
into an unconstrained quadratic problem. Letting $\bUpsilon$ be a $p$-dimensional
positive block diagonal matrix (associating a possibly different parameter $\mu_j$ to each function  $g_j$)
\begin{equation}
\bUpsilon = \mbox{diag}\bigl(\!\! \underbrace{\mu_1 ,..., \mu_1}_{p_1 \; \mbox{elements}}\!\! , ...,
\!\!\underbrace{\mu_j ,..., \mu_j}_{p_j \; \mbox{elements}}\!\!, ...,\!\!\underbrace{\mu_J, ...,\mu_J}_{p_J \; \mbox{elements}}\! \bigr),
\label{eq:diagonal_penalty}
\end{equation}
the solution of this quadratic problem is given by (denoting $\bzeta = {\bf u}_k +{\bf d}_k$)
\begin{multline}
\arg\min_{{\bf z}}  \bigl( {\bf G\, z} - \bzeta\bigr)^* \bUpsilon  \bigl( {\bf G\, z} - \bzeta\bigr)
= \Bigl( {\bf G}^* \bUpsilon {\bf G}\Bigr)^{-1} \! {\bf G}^* \bUpsilon \bzeta  \\
 =  \biggl[ \sum_{j=1}^J \mu_j \,
({\bf H}^{(j)})^*{\bf H}^{(j)}\biggr]^{-1} \sum_{j=1}^J \, \mu_j \, \bigl({\bf H}^{(j)}\bigr)^* \bzeta^{(j)} ,\label{eq:quadratic_problem}
\end{multline}
with $\bzeta^{(j)} = {\bf u}_k^{(j)} + {\bf d}_k^{(j)}$, where $\bzeta^{(j)}$,
${\bf u}_k^{(j)}$, and ${\bf d}_k^{(j)}$, for $j=1,...,J$, are the sub-vectors of
$\bzeta$, ${\bf u}_k$, and ${\bf d}_k$, respectively, corresponding to the
partition in \eqref{eq:G}, and the second equality results from the block structure
of matrices ${\bf G}$ (in \eqref{eq:G}) and $\bUpsilon$ (in \eqref{eq:diagonal_penalty}).

\item The separable structure of $g$ (as defined in \eqref{eq:G2}) allows decoupling
the minimization in line 4 of Algorithm \ref{alg:ADMM} into $J$ independent minimizations,
each of the form
\begin{equation}
{\bf u}_{k+1}^{(j)} \leftarrow \arg\min_{{\bf v}\in \mathbb{R}^{p_j}} \;  g_j ({\bf v}) +
\frac{\mu_j}{2} \, \bigl\| {\bf v}  - {\bf s}^{(j)}\bigr\|_2^2 , \label{eq:Moreau_1}
\end{equation}
for $j=1,...,J$, where ${\bf s}^{(j)} =  {\bf H}^{(j)} {\bf z}_{k+1} - {\bf d}^{(j)}_k.$
This minimization defines to the so-called {\it Moreau proximity operator}
of $g_j/\mu$ (denoted as $\mbox{prox}_{g_j/\mu_j}$) (see \cite{CombettesPesquet2007,CombettesSIAM}
and references therein)
applied to ${\bf s}^{(j)}$, thus
\[
{\bf u}_{k+1}^{(j)} \leftarrow \mbox{prox}_{g_j/\mu_j}( {\bf s}^{(j)} ) \equiv
\arg\min_{{\bf x}} \frac{1}{2} \bigl\| {\bf x} - {\bf s}^{(j)} \bigr\|_2^2 + \frac{g_j ({\bf x})}{\mu_j}.
\]
For some functions, the Moreau proximity operators
are known in closed form \cite{CombettesPesquet2007}; a well-known case is
the $\ell_1$ norm ($\tau \|{\bf x}\|_1 = \tau \sum_i |x_i|$), for which
the proximity operator is the soft-threshold function:
\begin{equation}
\mbox{soft}(v,\gamma) = \mbox{sign}({\bf v}) \odot \max\{ |{\bf v}| - \tau ,0\},\label{eq:soft}
\end{equation}
where the sign, max, and absolute value functions are applied in component-wise fashion.
\end{enumerate}

The convergence of the resulting instance of ADMM (shown
in Algorithm \ref{alg:ADMM2}) is the subject of the following
proposition.

\begin{prop}
\label{th:Eckstein2}
Consider problem \eqref{eq:unconstrained2}, where $g_j : \mathbb{R}^{p_j}\rightarrow
\bar{\mathbb{R}}$ are proper, closed, convex functions, and
$\textbf{H}^{(j)} \in \mathbb{R}^{p_j\times n}$. Consider positive
constants $\mu_1,...,\mu_J > 0$ and arbitrary ${\bf u}_0, {\bf d}_0\in \mathbb{R}^p$.
If matrix ${\bf G}$ has full column rank and (\ref{eq:unconstrained2}) has a solution, say ${\bf z}^*$,
then, the sequence $({\bf z}_1,{\bf z}_2,...)$  generated by Algorithm 2 converges to ${\bf z}^*$.
\end{prop}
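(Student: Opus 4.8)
The plan is to prove the proposition as a direct corollary of Theorem~\ref{th:Eckstein}, by recognizing that Algorithm~2 is precisely the instance of ADMM obtained when the generic problem \eqref{eq:unconstrained} is specialized to \eqref{eq:unconstrained2} through the construction described in \eqref{eq:G}--\eqref{eq:G2}, namely $f({\bf z})=0$, the stacked matrix ${\bf G}$, and the separable $g({\bf u})=\sum_{j=1}^J g_j({\bf u}^{(j)})$. Since Algorithm~2 employs a block-diagonal penalty $\bUpsilon$ rather than a scalar $\mu$, I would invoke the extension of Theorem~\ref{th:Eckstein} noted immediately after its statement, in which the quadratic terms $\mu\|{\bf G}{\bf z}-{\bf u}-{\bf d}\|_2^2$ are replaced by $({\bf G}{\bf z}-{\bf u}-{\bf d})^*\bUpsilon({\bf G}{\bf z}-{\bf u}-{\bf d})$ with $\bUpsilon$ positive diagonal; here $\bUpsilon$ is the block-diagonal matrix of \eqref{eq:diagonal_penalty}, which is positive because every $\mu_j>0$.

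First I would check that the hypotheses of (the extended) Theorem~\ref{th:Eckstein} hold. The matrix ${\bf G}$ has full column rank by assumption. The function $f\equiv 0$ is trivially closed, proper, and convex. For $g$, I would argue that a finite separable sum of closed, proper, convex functions is itself closed, proper, and convex: convexity and lower semicontinuity (closedness) are inherited block-wise, and properness holds because, each $g_j$ being proper, one can pick ${\bf u}^{(j)}\in\mathrm{dom}\,g_j$ for every $j$, yielding a point in $\mathrm{dom}\,g=\prod_j \mathrm{dom}\,g_j$ where $g$ is finite.

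Next I would match the iterates. Because $f\equiv 0$, line~3 of Algorithm~\ref{alg:ADMM} reduces to the unconstrained quadratic minimization whose closed-form solution is \eqref{eq:quadratic_problem}; this is exactly the ${\bf z}$-update of Algorithm~2, and it is well defined precisely because ${\bf G}^*\bUpsilon{\bf G}$ is invertible (full column rank of ${\bf G}$ together with positive $\bUpsilon$). The separability of $g$ makes line~4 decouple into the $J$ proximity computations \eqref{eq:Moreau_1}, which is the ${\bf u}$-update of Algorithm~2, and the dual update coincides with line~5. Since both the ${\bf z}$- and ${\bf u}$-updates are performed exactly, I would set the error sequences to $\eta_k=\rho_k=0$ for all $k$, which trivially satisfy $\sum_k\eta_k<\infty$ and $\sum_k\rho_k<\infty$. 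With all hypotheses verified and a solution ${\bf z}^*$ assumed to exist, Theorem~\ref{th:Eckstein} yields ${\bf z}_k\to{\bf z}^*$, completing the proof.

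The only genuinely non-routine point, and the one I would treat most carefully, is the legitimacy of using the diagonal-penalty version of Theorem~\ref{th:Eckstein}: the original statement is for a scalar $\mu$, so strictly speaking one must either cite the asserted extension or absorb $\bUpsilon$ by a change of variables. Concretely, rescaling each block by $\sqrt{\mu_j}$ (setting $\tilde{\bf H}^{(j)}=\sqrt{\mu_j}\,{\bf H}^{(j)}$, $\tilde{\bf u}^{(j)}=\sqrt{\mu_j}\,{\bf u}^{(j)}$, $\tilde{\bf d}^{(j)}=\sqrt{\mu_j}\,{\bf d}^{(j)}$) converts the weighted norm into an ordinary scalar-$\mu$ norm, preserves full column rank of the stacked matrix, and leaves the transformed $g$ closed, proper, and convex; crucially, ${\bf z}$ itself is not rescaled, so convergence of the transformed iterates transfers verbatim to ${\bf z}_k\to{\bf z}^*$. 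Everything else---properness and closedness of the separable $g$, and the exactness of the updates---is straightforward.
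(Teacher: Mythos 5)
Your proposal is correct and follows essentially the same route as the paper's own proof: specialize Theorem~\ref{th:Eckstein} to $f \equiv 0$ and the separable $g$ of \eqref{eq:G2}, verify closedness/properness/convexity, use full column rank of ${\bf G}$ (plus $\mu_j>0$) for invertibility of ${\bf G}^*\bUpsilon{\bf G}$, and set $\eta_k=\rho_k=0$ since both updates are exact. In fact you are more careful than the paper on one point: the paper invokes Theorem~\ref{th:Eckstein} directly even though that theorem is stated for a scalar $\mu$ while Algorithm~\ref{alg:ADMM2} uses the block-diagonal $\bUpsilon$ of \eqref{eq:diagonal_penalty} (the extension is only asserted, not proved, in the remark following the theorem), whereas your change of variables $\tilde{\bf H}^{(j)}=\sqrt{\mu_j}\,{\bf H}^{(j)}$, $\tilde{\bf u}^{(j)}=\sqrt{\mu_j}\,{\bf u}^{(j)}$, $\tilde{\bf d}^{(j)}=\sqrt{\mu_j}\,{\bf d}^{(j)}$ --- which leaves ${\bf z}$ untouched and hence transfers convergence verbatim --- closes that gap rigorously.
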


\begin{proof}
We simply need to show that the conditions of \textit{Theorem 1} are
satisfied. Problem \eqref{eq:unconstrained2} has the form \eqref{eq:unconstrained},
with $f({\bf z})=0$ and $g$ as given by \eqref{eq:G2}; if all the functions $g_1,...,g_J$ are
closed, proper, and convex, so are $g$ and $f$. Furthermore, if ${\bf G}$ has full column rank,
a sufficient condition for the inverse in \eqref{eq:quadratic_problem} to
exist is that $\mu_1,...,\mu_J > 0$. Finally, the minimization in line 3
of Algorithm~\ref{alg:ADMM} is solved exactly in line 4 of
Algorithm~\ref{alg:ADMM2}, and the minimization in line 4 of Algorithm~\ref{alg:ADMM}
is solved exactly in line 6 of Algorithm~\ref{alg:ADMM2}. Thus, we can take
the sequences $\eta_k$ and $\rho_k$ in Theorem~\ref{th:Eckstein} to be identically
zero.
\end{proof}

\begin{algorithm}
\DontPrintSemicolon
\caption{ADMM-2 \label{alg:ADMM2}}
Initialization: set $k=0$, choose $\mu_1,...,\mu_J > 0$, $\textbf{u}_0$, $\rule[-0.1cm]{0cm}{0.2cm}\textbf{d}_0$.\;
\Repeat{stopping criterion is satisfied}{
$\rule[-0.18cm]{0cm}{0.36cm}\bzeta \leftarrow \textbf{u}_k +\textbf{ d}_k$\\
$\rule[-0.5cm]{0cm}{0.5cm}{\displaystyle \textbf{z}_{k+1} \leftarrow \Bigl(\sum_{j=1}^{J} \mu_j (\textbf{H}^{(j)})^* \textbf{H}^{(j)} \Bigr)^{-1}
		 \sum_{j=1}^{J} \mu_j (\textbf{H}^{(j)})^* \bzeta^{(j)}}$	\\							
\For{$j=1$ {\rm to} $J$}{$\rule[-0.3cm]{0cm}{0.6cm}\textbf{u}_{k+1}^{(j)} \leftarrow \prox_{g_j / \mu_j} (\textbf{H}^{(j)}\textbf{z}_{k+1} - \textbf{d}_k^{(j)}) $ \\
$\rule[-0.25cm]{0cm}{0.5cm}\textbf{d}_{k+1}^{(j)} \leftarrow \textbf{d}_k^{(j)} - ( \textbf{H}^{(j)}\textbf{z}_{k+1} - \textbf{u}_{k+1}^{(j)}$ )}
$\rule[-0.15cm]{0cm}{0.3cm}k \leftarrow k+1$}
\end{algorithm}

If the proximity operators of the functions $g_j$ are simple,
the computational bottleneck of Algorithm \ref{alg:ADMM2}  is the matrix inversion in line 4.
As shown in~\cite{Tao,WangZhang_ADMMTV_08}, inversions of this form can
be efficiently tackled using the FFT, if all the matrices ${\bf H}^{(j)}$ are
circulant, thus jointly diagonalized by the DFT (more details in Section~\ref{sec:TV_Deblurring}).
Due to this condition, the work in \cite{Tao,WangZhang_ADMMTV_08} was limited to
deconvolution with periodic BCs, under TV regularization.
More recent work in \cite{AfonsoFig_TIP10,Afonso2011,FigBioucas_10} has
shown how these inversions can still be efficiently handled via the FFT
(and other fast transforms) in problems such as image reconstruction from
partial Fourier observations and inpainting, and with other regularizers,
such as those based on tight frames. This paper extends that work,
showing how to handle deconvolution problems with unknown boundaries.

\section{Image Deconvolution with Periodic BC}
\label{sec:TV_Deblurring}
This section reviews the ADMM-based approach to image deconvolution with
periodic BC, using the frame-based formulations and TV-based
regularization \cite{AfonsoFig_TIP10,Afonso2011,FigBioucas_10,ROF,Tao},
the standard regularizers for this class of imaging inverse problems.
The next section will then show how this approach can be extended to
deal with unknown boundaries.

\subsection{Frame-Based Synthesis Formulation}
\label{sec:synth_periodic}
There are two standard ways to formulate frame-based regularization for
image deconvolution, both exploiting the fact that natural images
have sparse representations on wavelet\footnote{We use the generic term ``wavelet" to mean any
wavelet-like multiscale representation, such as ``curvelets," ``beamlets," or ``ridgelets" \cite{Mallat}.} frames:
the synthesis and analysis formulations
\cite{AfonsoFig_TIP10,EladMilanfar2007,SelesnickFigueiredo2009}.

The synthesis formulation expresses the estimated image $\widehat{\bf x}$ as a
linear combination of the elements of some wavelet
frame (an orthogonal basis or an overcomplete dictionary), {\it i.e.}, $\widehat{\textbf{x}} = {\bf W}\widehat{\textbf{z}}$,
with $\widehat{\bf z}$ given by
\begin{equation}
\widehat{\textbf{z}} = \arg \mathop{\min}_{\textbf{z} \in \mathbb{R}^d} \frac{1}{2}\| \textbf{y} - \textbf{AWz} \|_2^2  + \lambda \phi({\bf z}),
\label{eq:unconstrained_synthesis}
\end{equation}
where $\textbf{y}\in \mathbb{R}^n$ is a vector containing all the pixels (in lexicographic order)
of the observed image, ${\bf A} \in \mathbb{R}^{n\times n}$ is a matrix representation of the
(periodic) convolution, the columns of matrix ${\bf W}\in \mathbb{R}^{n\times d}$ ($d\geq n$)
are the elements of the adopted frame, $\phi$
is a regularizer encouraging $\widehat{\bf z}$ to be sparse (a typical choice, herein adopted,
is $\phi({\bf z}) = \|{\bf z}\|_1$), and $\lambda > 0$ is the
regularization parameter \cite{AfonsoFig_TIP10,EladMilanfar2007,Figueiredo03}.
The first term in \eqref{eq:unconstrained_synthesis} results from assuming the
usual observation model
\begin{equation}
{\bf y} = {\bf Ax} + {\bf n}\label{eq:observation_BC}
\end{equation}
where ${\bf x = Wz}$, and ${\bf n}$ is a sample of
white Gaussian noise with unit variance (any other value may be absorbed by $\lambda$).

Clearly, problem \eqref{eq:unconstrained_synthesis} has the form \eqref{eq:unconstrained2},
with $J=2$ and
\begin{align}
& g_1:\mathbb{R}^n \rightarrow \mathbb{R}, & & g_1({\bf v}) = \frac{1}{2}\|{\bf y}-{\bf v}\|_2^2\label{eq:g_1}\\
& g_2:\mathbb{R}^d \rightarrow \mathbb{R}, & & g_2({\bf z}) = \lambda \|{\bf z}\|_1\label{eq:g_2}\\
& {\bf H}^{(1)} \in \mathbb{R}^{n\times d}, & &   {\bf H}^{(1)} = {\bf AW}\label{eq:H_1_synth}\\
& {\bf H}^{(2)} \in \mathbb{R}^{d\times d}, &  &  {\bf H}^{(2)} = {\bf I}\label{eq:H_2_synth}.
\end{align}
The proximity operators of $g_1$ and $g_2$,
 key components of Algorithm \ref{alg:ADMM2} for solving \eqref{eq:unconstrained_synthesis},
have simple expressions,
\begin{align}
& \mbox{prox}_{g_1/\mu_1}({\bf v}) = \frac{{\bf y} + \mu_1 {\bf v}}{1+\mu_1}, \label{eq:prox_g_1}\\
& \mbox{prox}_{g_2/\mu_2}({\bf z}) = \mbox{soft}\bigl({\bf z},\lambda/\mu_2\bigr), \label{eq:prox_g_2}
\end{align}
where ``soft" denotes the soft-threshold function in \eqref{eq:soft}.
Line 4 of Algorithm \ref{alg:ADMM2} (the other key component) has
the form
\begin{equation}
{\bf z}_{k+1} \leftarrow
\Bigl( {\bf W}^* {\bf A}^* {\bf A}{\bf W} + \frac{\mu_2}{\mu_1} {\bf I}\Bigr)^{-1} \bigl(  {\bf W}^*{\bf A}^*
\bzeta^{(1)} + \frac{\mu_2}{\mu_1}\bzeta^{(2)}\bigr).\label{eq:inverse_synth2}
\end{equation}
As shown in \cite{AfonsoFig_TIP10}, if ${\bf W}$ corresponds to a Parseval
frame \cite{Mallat},
{\it i.e.}, if\footnote{In fact, the frame only needs to be tight  \cite{Mallat}, {\it i.e.}, ${\bf W}{\bf W}^* = \omega {\bf I}$;
without loss of generality, we assume $\omega  = 1$, which yields lighter notation.
Recently, a method for relaxing the tight frame condition was proposed in \cite{Pustelnik2012}.}
 ${\bf W}{\bf W}^* = {\bf I}$ (although possibly ${\bf W}^*{\bf W}\neq {\bf I}$),
the Sherman-Morrison-Woodbury  inversion formula can be used to write the matrix
inverse in \eqref{eq:inverse_synth2} as
\begin{equation}
\frac{\mu_1}{\mu_2} \Bigl({\bf I} -
{\bf W}^* \underbrace{ {\bf A}^* \bigl( {\bf A}{\bf A}^* + (\mu_2/\mu_1){\bf I}\bigr)^{-1} {\bf A}}_{{\bf F}} {\bf W} \Bigr).
\label{eq:inverse_synth}
\end{equation}
Assuming periodic BC, matrix ${\bf A}$ is circulant,
thus factors into
\begin{equation}
{\bf A} = {\bf U}^*\bLambda {\bf U},\label{eq:DFT_fact}
\end{equation}
where  ${\bf U}$ and ${\bf U}^*$ are the unitary matrices representing the DFT
and its inverse, and $\bLambda$ is the diagonal matrix of the DFT
coefficients of the convolution kernel.
Using \eqref{eq:DFT_fact}, matrix ${\bf F}$ defined in
\eqref{eq:inverse_synth} can be written as
\begin{equation}
{\bf F} = {\bf U}^* \bLambda^* \bigl( |\bLambda|^2 + (\mu_2/\mu_1){\bf I}\bigr)^{-1}\bLambda{\bf U},\label{eq:F_synth_FFT}
\end{equation}
where $|\bLambda|^2$ denotes the squared
absolute values of the entries of $\bLambda$. Since the product
$\bLambda^* \bigl( |\bLambda|^2 + (\mu_2/\mu_1){\bf I}\bigr)^{-1}\bLambda$
involves only diagonal matrices, it can be computed with only $O(n)$ cost, while the products by
${\bf U}$ and ${\bf U}^*$ (DFT and its inverse) can be computed with $O(n \log n)$ cost,
using the FFT; thus, computing ${\bf F}$ and multiplying by it has $O(n\log n)$ cost.

The leading cost of each application of \eqref{eq:inverse_synth2}
(line 4 of Algorithm \ref{alg:ADMM2}) is either the $O(n \log n)$
cost associated with ${\bf F}$ or the cost of the products by
${\bf W}^*$ and ${\bf W}$. For most tight frames used in image
restoration ({\it e.g.} undecimated wavelet transforms, curvelets,
complex wavelets), these products correspond to
direct and inverse transforms, for which fast $O(n\log n)$
algorithms exist \cite{Mallat}. In conclusion, under periodic BC
and for a large class of frames, each iteration of Algorithm~\ref{alg:ADMM2}
for solving \eqref{eq:unconstrained_synthesis} has $O(n \log n)$ cost.
Finally, convergence of this instance of Algorithm~\ref{alg:ADMM2} is established
in the following proposition.

\begin{prop} \label{th:prop2}The instance of Algorithm~\ref{alg:ADMM2}, with the
definitions in \eqref{eq:g_1}--\eqref{eq:H_2_synth}, with line 4 computed
as in \eqref{eq:inverse_synth2}, and the proximity operators in line
6 as given in \eqref{eq:prox_g_1} and \eqref{eq:prox_g_2}, converges
to a solution of \eqref{eq:unconstrained_synthesis}.
\end{prop}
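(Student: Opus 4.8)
The plan is to reduce everything to Proposition~\ref{th:Eckstein2} by verifying that this particular instance of Algorithm~\ref{alg:ADMM2} satisfies all of its hypotheses. The specified algorithm is precisely Algorithm~\ref{alg:ADMM2} applied to \eqref{eq:unconstrained_synthesis} with the choices \eqref{eq:g_1}--\eqref{eq:H_2_synth}, so once the conditions of Proposition~\ref{th:Eckstein2} are checked -- closedness, properness, and convexity of the $g_j$; positivity of the $\mu_j$; full column rank of ${\bf G}$; and existence of a solution -- convergence of $\{{\bf z}_k\}$ to a minimizer of \eqref{eq:unconstrained_synthesis} follows at once.

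First I would dispatch the regularity of the two functions. The function $g_1$ in \eqref{eq:g_1} is a finite quadratic, hence proper, continuous (therefore closed), and convex; the function $g_2$ in \eqref{eq:g_2} is a positive multiple of the $\ell_1$ norm, hence also proper, closed, and convex. Positivity $\mu_1,\mu_2>0$ is a standing parameter choice of the algorithm, so it holds by construction.

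The two substantive points are the rank condition and the existence of a minimizer. For the rank condition, I would use \eqref{eq:H_1_synth}--\eqref{eq:H_2_synth}: the matrix ${\bf G}$ is the vertical stack of ${\bf H}^{(1)}={\bf AW}$ and ${\bf H}^{(2)}={\bf I}$, and the $d\times d$ identity block alone already forces ${\bf G}$ to have full column rank $d$, regardless of ${\bf A}$ and ${\bf W}$. Combined with $\mu_1,\mu_2>0$, this also makes $\sum_j\mu_j({\bf H}^{(j)})^*{\bf H}^{(j)}=\mu_1{\bf W}^*{\bf A}^*{\bf A}{\bf W}+\mu_2{\bf I}$ positive definite, so the inverse appearing in \eqref{eq:inverse_synth2} (line 4) is well defined. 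For existence, I would note that the objective in \eqref{eq:unconstrained_synthesis} is the sum of a nonnegative quadratic and the coercive term $\lambda\|{\bf z}\|_1$; being coercive, closed, proper, and convex on $\mathbb{R}^d$, it attains its infimum, so a solution ${\bf z}^*$ exists.

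The last item -- and the only one requiring any real care -- is to confirm that lines 4 and 6 of the stated algorithm solve lines 3 and 4 of Algorithm~\ref{alg:ADMM} \emph{exactly}, so that the inexactness tolerances $\eta_k,\rho_k$ of Theorem~\ref{th:Eckstein} can be taken identically zero, exactly as in the proof of Proposition~\ref{th:Eckstein2}. The expressions \eqref{eq:prox_g_1} and \eqref{eq:prox_g_2} are the exact closed forms of $\prox_{g_1/\mu_1}$ and $\prox_{g_2/\mu_2}$, so line 6 is exact. The update \eqref{eq:inverse_synth2} is the exact minimizer \eqref{eq:quadratic_problem} specialized to $J=2$; the rewriting via the Sherman--Morrison--Woodbury identity in \eqref{eq:inverse_synth} and the DFT factorization \eqref{eq:DFT_fact}--\eqref{eq:F_synth_FFT} only change \emph{how} this same vector is computed and are exact algebraic identities under the standing assumptions ${\bf W}{\bf W}^*={\bf I}$ (Parseval frame) and ${\bf A}$ circulant (periodic BC). Thus the hard part is purely bookkeeping: arguing that the efficient FFT-based computation introduces no error, rather than any deep analytic estimate. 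With all hypotheses verified, Proposition~\ref{th:Eckstein2} delivers the claimed convergence.
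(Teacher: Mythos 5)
Your proof is correct and follows essentially the same route as the paper's: both verify the hypotheses of Proposition~\ref{th:Eckstein2} by noting that $g_1,g_2$ are proper, closed, and convex, that coercivity of the $\ell_1$ term guarantees a minimizer exists, and that the identity block ${\bf H}^{(2)}={\bf I}$ forces ${\bf G}$ to have full column rank. Your additional check that the Sherman--Morrison--Woodbury/FFT implementation of \eqref{eq:inverse_synth2} is an exact algebraic identity (so $\eta_k=\rho_k=0$ in Theorem~\ref{th:Eckstein}) is a point the paper leaves implicit, having already absorbed it into the proof of Proposition~\ref{th:Eckstein2}, but it is a sound and welcome clarification rather than a different argument.
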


\begin{proof}
Since $g_1$ and $g_2$ are proper, closed, convex functions,
so is the objective function in \eqref{eq:unconstrained_synthesis}.
Because $g_2$ is coercive, so is the objective function in \eqref{eq:unconstrained_synthesis},
thus its set of minimizers is not empty \cite{CombettesSIAM}.
Matrix ${\bf H}^{(2)} = {\bf I}$ obviously has full column rank, which implies
that ${\bf G}$ also has full column rank; that fact
combined with the fact that $g_1$ and $g_2$ are proper, closed, convex functions,
and that a minimizer exists, allows invoking Proposition~\ref{th:Eckstein2}
which concludes the proof.
\end{proof}

\subsection{Frame-Based Analysis Formulation}\label{sec:WAV_A_BC}
In the frame-based analysis approach, the problem is formulated
directly with respect to the unknown image (rather than its synthesis
coefficients),
\begin{equation}
\widehat{\bf x} = \arg \mathop{\min}_{{\bf x} \in \mathbb{R}^n} \frac{1}{2}\| \textbf{y} - \textbf{Ax} \|_2^2  + \lambda \|{\bf Px}\|_1,
\label{eq:unconstrained_analysis}
\end{equation}
where ${\bf A}$ is as in \eqref{eq:unconstrained_synthesis}
and ${\bf P}\in \mathbb{R}^{m\times n}$ ($m\geq n$)
is the analysis operator of a Parseval frame, {\it i.e.}, satisfying\footnote{As above,
the frame is only required to be tight, {\it i.e.}, ${\bf P}^*{\bf P} = \omega {\bf I}$;
since $\omega  = 1$ can be obtained simply by normalization, there is no loss of generality.}
 ${\bf P}^*{\bf P}={\bf I}$
(although possibly ${\bf P}{\bf P} \neq {\bf I}$, unless the frame is an orthonormal basis)
\cite{Mallat}. Problem \eqref{eq:unconstrained_analysis} has the form \eqref{eq:unconstrained2},
with $J=2$, $g_1$ and $g_2$ as given in \eqref{eq:g_1} and \eqref{eq:g_2}, respectively, and
\begin{align}
& {\bf H}^{(1)} \in \mathbb{R}^{n\times n}, & &   {\bf H}^{(1)} = {\bf A}\label{eq:H_1_analysis}\\
& {\bf H}^{(2)} \in \mathbb{R}^{m\times n}, &  &  {\bf H}^{(2)} = {\bf P}\label{eq:H_2_analysis}.
\end{align}

Since the proximity operators of $g_1$ and $g_2$ are the same as in the
synthesis case (see \eqref{eq:prox_g_1} and \eqref{eq:prox_g_2}), only
line 4 of Algorithm \ref{alg:ADMM2} has a different form:
\begin{equation}
{\bf z}_{k+1} \leftarrow
\Bigl( {\bf A}^* {\bf A} + \frac{\mu_2}{\mu_1}\, {\bf P}^*{\bf P}\Bigr)^{-1} \bigl(  {\bf A}^*
\bzeta^{(1)} + \frac{\mu_2}{\mu_1}\, {\bf P}^*\bzeta^{(2)}\bigr).\label{eq:invAnalysis}
\end{equation}
Since ${\bf P}^*{\bf P}={\bf I}$ and ${\bf A} = {\bf U}^*\bLambda {\bf U}$,
the matrix inverse is simply
\begin{equation}
\Bigl( {\bf A}^* {\bf A} + \frac{\mu_2}{\mu_1}\, {\bf I}\Bigr)^{-1} =
{\bf U}^*\bigl(|\bLambda|^2 + (\mu_2/\mu_1){\bf I}\bigr)^{-1} {\bf U},\label{eq:invAnalysisp1}
\end{equation}
which has $O(n\log n)$ cost, since $\bigl(|\bLambda|^2 + (\mu_2/\mu_1){\bf I}\bigr)$
is a diagonal matrix and the products by ${\bf U}$ and ${\bf U}^*$ (the DFT and its inverse)
can be computed using the FFT.

In conclusion, under periodic BC and for a large class of frames,
each iteration of Algorithm~\ref{alg:ADMM2} for solving
\eqref{eq:unconstrained_analysis} has $O(n \log n)$ cost.
Finally, convergence of this instance of Algorithm~\ref{alg:ADMM2} is claimed
in the following proposition.

\begin{prop}\label{th:prop_analysis}
 The instance of Algorithm~\ref{alg:ADMM2}, with the
definitions in \eqref{eq:g_1}, \eqref{eq:g_2}, \eqref{eq:H_1_analysis},
and \eqref{eq:H_2_analysis}, with line 4 computed
as in \eqref{eq:invAnalysis}--\eqref{eq:invAnalysisp1}, and the proximity operators in line
6 as given in \eqref{eq:prox_g_1} and \eqref{eq:prox_g_2}, converges
to a solution of \eqref{eq:unconstrained_analysis}.
\end{prop}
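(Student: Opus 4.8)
The plan is to mirror the argument already used for Proposition~\ref{th:prop2} in the synthesis case, reducing everything to a single application of Proposition~\ref{th:Eckstein2}. As there, I would first observe that $g_1$ and $g_2$, as defined in \eqref{eq:g_1}--\eqref{eq:g_2}, are proper, closed, and convex, so that the objective in \eqref{eq:unconstrained_analysis} inherits these properties. Since the proximity operators and the inversion step have already been spelled out in \eqref{eq:prox_g_1}, \eqref{eq:prox_g_2}, and \eqref{eq:invAnalysis}--\eqref{eq:invAnalysisp1}, what remains to verify are exactly the two hypotheses of Proposition~\ref{th:Eckstein2}: (i) existence of a minimizer of \eqref{eq:unconstrained_analysis}, and (ii) full column rank of the stacked matrix ${\bf G}$ of \eqref{eq:G}, whose two blocks are now ${\bf H}^{(1)} = {\bf A}$ and ${\bf H}^{(2)} = {\bf P}$.

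For (i), I would establish coercivity of the objective. Here the argument diverges from the synthesis case: there, $g_2$ is coercive directly in its own argument and ${\bf H}^{(2)} = {\bf I}$, whereas now the regularizer acts through ${\bf P}$, so I must show that ${\bf x} \mapsto \lambda\|{\bf P}{\bf x}\|_1$ is coercive. This is precisely where the Parseval (tight-frame) identity ${\bf P}^*{\bf P} = {\bf I}$ enters: it gives $\|{\bf P}{\bf x}\|_2^2 = {\bf x}^*{\bf P}^*{\bf P}{\bf x} = \|{\bf x}\|_2^2$, and combining this with the elementary bound $\|{\bf v}\|_1 \geq \|{\bf v}\|_2$ yields $\|{\bf P}{\bf x}\|_1 \geq \|{\bf x}\|_2$. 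Hence the objective tends to $+\infty$ as $\|{\bf x}\| \to \infty$, so it is coercive and its set of minimizers is nonempty.

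For (ii), I would again invoke ${\bf P}^*{\bf P} = {\bf I}$: this makes ${\bf P}$ injective, since ${\bf P}{\bf x} = {\bf 0}$ implies ${\bf x} = {\bf P}^*{\bf P}{\bf x} = {\bf 0}$, so ${\bf P}$ has full column rank. Because ${\bf P}$ is a block of the vertically stacked matrix ${\bf G}$, any ${\bf z}$ with ${\bf G}{\bf z} = {\bf 0}$ must in particular satisfy ${\bf P}{\bf z} = {\bf 0}$, forcing ${\bf z} = {\bf 0}$; thus ${\bf G}$ has full column rank. With convexity, existence of a minimizer, and full column rank of ${\bf G}$ all established, Proposition~\ref{th:Eckstein2} applies directly and yields convergence of the sequence $({\bf z}_k)$ to a solution of \eqref{eq:unconstrained_analysis}.

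The proof is almost entirely routine; the only point requiring attention — the ``hard part,'' such as it is — is recognizing that the single hypothesis ${\bf P}^*{\bf P} = {\bf I}$ does double duty, supplying both the coercivity needed for existence of a minimizer and the full-column-rank condition on ${\bf G}$. In the synthesis case both facts were immediate from ${\bf H}^{(2)} = {\bf I}$ together with the coercivity of $\|\cdot\|_1$; in the analysis formulation they must instead both be routed through the tight-frame identity, with the norm inequality $\|\cdot\|_1 \geq \|\cdot\|_2$ bridging the coercivity argument.
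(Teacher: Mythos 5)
Your proof is correct and takes essentially the same approach as the paper's: both establish coercivity of the objective through the tight-frame property of ${\bf P}$ (the paper argues via coercivity of $g_2$ together with $\ker({\bf P})=\{\bzero\}$, which you make explicit with $\|{\bf Px}\|_1 \geq \|{\bf Px}\|_2 = \|{\bf x}\|_2$), deduce full column rank of ${\bf G}$ from injectivity of ${\bf P}$, and conclude by invoking Proposition~\ref{th:Eckstein2}. The only difference is that you spell out the norm inequality the paper leaves implicit.
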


\begin{proof}
Since $g_1$ and $g_2$ are proper, closed, convex functions,
so is the objective function in \eqref{eq:unconstrained_synthesis}.
Because $g_2$ is coercive and ${\bf P}$ is the analysis operator of
a tight frame, $\ker({\bf P}) = \{\bzero\}$ (where $\ker({\bf P})$
is the null space of matrix ${\bf P}$), the objective
function in \eqref{eq:unconstrained_synthesis} is coercive,
thus its set of minimizers is not empty \cite{CombettesSIAM}.
Matrix ${\bf H}^{(2)} = {\bf P}$ has full column rank, thus the same
is true of ${\bf G}$; combining that with $g_1$ and $g_2$ being proper, closed, and convex,
and given the existence of a minimizer, allows invoking Proposition~\ref{th:Eckstein2}
to conclude the proof.
\end{proof}

\subsection{Total Variation}
\label{ssec:TV_Cyclic}
The classical formulation of TV-based image deconvolution consists in
a convex optimization problem
\begin{equation}
\widehat{\textbf{x}} = \arg \mathop{\min}_{\textbf{x} \in \mathbb{R}^n} \frac{1}{2}\| \textbf{y} - \textbf{Ax} \|_2^2  + \lambda \sum_{i=1}^n \|\textbf{D}_i \; \textbf{x}\|_2 ,
\label{eq:unconstrained_TV}
\end{equation}
where $\textbf{x}\in \mathbb{R}^n, \textbf{y} \in \mathbb{R}^n$, $\lambda >0 $, and ${\bf A}\in \mathbb{R}^{n\times n}$
have the same meanings as above, and each $\textbf{D}_i \in \mathbb{R}^{2\times n}$ is the matrix that
computes the horizontal and vertical differences at pixel $i$ (also with periodic BC)
\cite{Tao,WangZhang_ADMMTV_08}. The sum $\sum_{i=1}^n \| \textbf{D}_i \, \textbf{x}\|_2 = \mbox{TV}(\textbf{x})$ defines
the so-called {\it total-variation}\footnote{In fact, this is the so-called isotropic discrete approximation
of the continuous total-variation \cite{Chambolle2004,ROF}.} (TV) of image $\textbf{x}$.

Problem \eqref{eq:unconstrained_TV} has the form \eqref{eq:unconstrained2}, with $J=n+1$, and
\begin{align}
& g_1 : \mathbb{R}^n \rightarrow \mathbb{R}, & & g_1({\bf v}) = \frac{1}{2}\|{\bf y} - {\bf v}\|_2^2,\label{eq:corresp3}\\
& g_i : \mathbb{R}^2 \rightarrow \mathbb{R}, & &  g_i({\bf v}) = \lambda\, \|{\bf v}\|_2, \;\,\mbox{for}\;\, i=2,...,n+1,\label{eq:corresp3b}\\
& {\bf H}^{(1)}\in \mathbb{R}^{n\times n}, & & {\bf H}^{(1)} = {\bf A},\label{eq:corresp3c}\\
& {\bf H}^{(i)} \in \mathbb{R}^{2\times n}, & & {\bf H}^{(i)} = {\bf D}_{i-1} , \;\; \mbox{for}\;\; i=2,...,n+1,\label{eq:corresp3d}
\end{align}

For matrix $\bUpsilon$ (see \eqref{eq:diagonal_penalty}), we adopt $\mu_2 = \cdots = \mu_{n+1} > 0$.
The main steps of the resulting instance of Algorithm \ref{alg:ADMM2} for
solving \eqref{eq:unconstrained_TV} are as follows.

\begin{itemize}
  \item As shown in \cite{Tao,WangZhang_ADMMTV_08}, the matrix to be inverted in line 4 of
  Algorithm \ref{alg:ADMM2} can be written as
  \begin{multline}
  \mu_1 {\bf A}^* {\bf A} + \mu_2 \sum_{j=1}^n {\bf D}_j^* \; {\bf D}_j = \\
  \mu_1 {\bf A}^* {\bf A} + \mu_2 \bigl( ({\bf D}^h)^* {\bf D}^h + ({\bf D}^v)^* {\bf D}^v\bigr) ,\label{eq:matrix_periodic}
  \end{multline}
  where ${\bf D}^h,\, {\bf D}^v \in \mathbb{R}^{n\times n}$ are the matrices
  collecting the first and second rows, respectively, of each of the $n$
  matrices ${\bf D}_i$; that is, ${\bf D}^{h}$ computes all the horizontal
  differences and ${\bf D}^v$ all the vertical differences. With periodic BC, the
  matrices in \eqref{eq:matrix_periodic} can be factorized as
  \begin{equation}
   \textbf{A} = \textbf{U}^* \bLambda \textbf{U},\;\;\;\;  \textbf{D}^h   = \textbf{U}^* \bDelta^{\! h} \textbf{U}, \; \;\;\;
   \textbf{D}^v = \textbf{U}^* \bDelta^{\! v} \textbf{U},
   \end{equation}
   where $\bLambda$, $\bDelta^{\! h}$, and $\bDelta^{\! v}$ are diagonal matrices. Thus, the inverse of
   the matrix in \eqref{eq:matrix_periodic}  can be written as
  \begin{equation}
   \textbf{U}^* \bigl( \mu_1 |\bLambda|^2 + \mu_2 |\bDelta^{\! h}|^2 + \mu_2 |\bDelta^{\! v}|^2 \bigr)^{-1} {\bf U} \equiv {\bf K},\label{eq:fast_inversion}
  \end{equation}
  which involves a diagonal inversion, with $O(n)$ cost, and the products by $\textbf{U}$ and $\textbf{U}^*$
  (the direct and inverse DFT), which have $O(n\log n)$ cost (by FFT).
  \item The sum yielding the vector to which this inverse is applied  (line 4 of Algorithm~\ref{alg:ADMM2}) is
  \begin{eqnarray}
  \sum_{j=1}^{n+1} \mu_j (\textbf{H}^{(j)})^* \bzeta^{(j)}  & = & \mu_1 {\bf A}^* \bzeta^{(1)}
  + \mu_2 \sum_{j=1}^n {\bf D}_j^* \, \bzeta^{(j+1)} \nonumber \\
  & & \hspace {-2.5cm} = \mu_1 {\bf A}^* \bzeta^{(1)} + \mu_2 ({\bf D}^h)^* \bdelta^h + \mu_2 ({\bf D}^v)^* \bdelta^v,\nonumber
  \end{eqnarray}
  where $\bdelta^h,\, \bdelta^v \in \mathbb{R}^n$ contain the first and second component,
  respectively, of all the $\bzeta^{(j)}$ vectors, for $j=1,...,n$.
  As seen above, the product by ${\bf A}^*$ has $O(n\log n)$ cost via the FFT, while the
  products by $({\bf D}^h)^*$ and $({\bf D}^v)^*$ (which correspond to
  local differences) have $O(n)$ cost.
  \item The proximity operator of $g_{1}$ is as given in \eqref{eq:prox_g_1}.
  \item The proximity operator of $g_i ({\bf v}) = \lambda\, \|{\bf v}\|_2$, for $i=2,...,n+1$, is
  the so-called vector-soft function \cite{Tao,WangZhang_ADMMTV_08},
  \begin{eqnarray}
  \hspace{-0.2cm} \mbox{prox}_{\frac{g_i}{\mu_i}} ({\bf v}) = \mbox{v-soft}\Bigl( {\bf v} , \frac{\lambda}{\mu_i} \Bigr)
   = \frac{{\bf v}}{\|{\bf v}\|_2} \max\Bigl\{ \|{\bf v}\|_2 - \frac{\lambda}{\mu_i} , 0 \Bigr\}, \nonumber
  \end{eqnarray}
  with the convention that $\mbox{\boldmath $0$}/\|\mbox{\boldmath $0$}\|_2 = 0.$
\end{itemize}

Plugging all these equalities into Algorithm \ref{alg:ADMM2} yields Algorithm \ref{alg:ADMM3}, which
is similar to the one presented in \cite{Tao}.

\begin{algorithm}
\DontPrintSemicolon
\caption{ADMM-TV \label{alg:ADMM3}}
Set $k=0$, choose $\mu_1,\mu_2 > 0$, $\textbf{u}^{(j)}_0$, $\rule[-0.15cm]{0cm}{0.3cm}\textbf{d}^{(j)}_0$,  $j=1,...,n+1$\;
\Repeat{stopping criterion is satisfied}{
$\rule[-0.2cm]{0cm}{0.4cm} \bzeta \leftarrow \textbf{u}_k +\textbf{ d}_k$\\
$\bdelta^h = \bigl[ (\bzeta^{(2)})_1 , ..., (\bzeta^{(n+1)})_1 \bigr]^*\rule[-0.2cm]{0cm}{0.5cm}$\\
$\bdelta^v = \bigl[ (\bzeta^{(2)})_2 , ..., (\bzeta^{(n+1)})_2 \bigr]^*\rule[-0.3cm]{0cm}{0.5cm}$\\
$\rule[-0.3cm]{0cm}{0.6cm}\textbf{z}_{k+1} \leftarrow
{\bf K} \Bigl( \mu_1 {\bf A}^* \bzeta^{(1)} + \mu_2 ({\bf D}^h)^* \bdelta^h + \mu_2 ({\bf D}^v)^* \bdelta^v \Bigr)$\\							
$\rule[-0.3cm]{0cm}{0.6cm} \textbf{u}_{k+1}^{(1)} \leftarrow  \frac{1}{1 + \mu_1} \Bigl( {\bf y} + \mu_1 \bigl(
{\bf A} \; {\bf z}_{k+1} - {\bf d}_k^{(1)}\bigr)\Bigr) $\\
$\rule[-0.3cm]{0cm}{0.6cm} \textbf{d}_{k+1}^{(1)} \leftarrow \textbf{d}_k^{(1)} - ( \textbf{A} \; \textbf{z}_{k+1} - \textbf{u}_{k+1}^{(1)})$\\
\For{$j=2$ {\rm to} $n+1$}{$\rule[-0.3cm]{0cm}{0.6cm}
\textbf{u}_{k+1}^{(j)} \leftarrow \mbox{vect-soft}\Bigl( {\bf D}_{j-1} \; {\bf z}_{k+1} - {\bf d}_k^{(j)} , \frac{\lambda}{\mu_2} \Bigr) $ \\
$\rule[-0.3cm]{0cm}{0.6cm}\textbf{d}_{k+1}^{(j)} \leftarrow \textbf{d}_k^{(j)} - ( \textbf{D}_{j-1} \; \textbf{z}_{k+1} - \textbf{u}_{k+1}^{(j)}$ )}
$\rule[-0.2cm]{0cm}{0.4cm}k \leftarrow k+1$}
\end{algorithm}

Finally, convergence of Algorithm~\ref{alg:ADMM3} is addressed in the following
proposition.

\begin{prop} \label{th:propTV}
Consider problem \eqref{eq:unconstrained_TV} and assume that
$\bone \not\in \ker({\bf A})$. Then Algorithm~\ref{alg:ADMM3} converges
to a solution of \eqref{eq:unconstrained_TV}.
\end{prop}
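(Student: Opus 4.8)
The plan is to reduce this to Proposition~\ref{th:Eckstein2} by checking its three hypotheses for the TV instantiation \eqref{eq:corresp3}--\eqref{eq:corresp3d}: that every component function is proper, closed, and convex; that the stacked matrix ${\bf G}$ has full column rank; and that \eqref{eq:unconstrained_TV} admits a minimizer. The first hypothesis is immediate, exactly as in Propositions~\ref{th:prop2} and \ref{th:prop_analysis}: $g_1({\bf v}) = \frac12\|{\bf y}-{\bf v}\|_2^2$ and $g_i({\bf v}) = \lambda\|{\bf v}\|_2$ (for $i=2,\dots,n+1$) are all proper, closed, and convex, so the objective of \eqref{eq:unconstrained_TV} inherits these properties.

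The remaining two conditions both rest on the hypothesis $\bone\not\in\ker({\bf A})$, and this is where the argument needs care. First I would compute $\ker({\bf G})$. Since ${\bf G}$ stacks ${\bf A}$ with all the ${\bf D}_i$, a vector lies in $\ker({\bf G})$ iff it is annihilated by ${\bf A}$ and simultaneously by the full difference operator, equivalently by both ${\bf D}^h$ and ${\bf D}^v$. The key structural fact is that, under periodic BC, a vector whose horizontal and vertical differences all vanish must be constant, so $\ker({\bf D}^h)\cap\ker({\bf D}^v) = \mathrm{span}\{\bone\}$; this is cleanest in the DFT domain, where the diagonal symbols $\bDelta^h$ and $\bDelta^v$ share only the DC (zero-frequency) entry as a common zero. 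Hence $\ker({\bf G}) = \ker({\bf A})\cap\mathrm{span}\{\bone\}$, which is $\{\bzero\}$ precisely when $\bone\not\in\ker({\bf A})$, giving full column rank.

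Existence of a minimizer follows from the same hypothesis through coercivity. The objective is a sum of the quadratic term, whose recession is $+\infty$ along every direction outside $\ker({\bf A})$ and $0$ inside it, and the TV term, whose recession $\lambda\sum_i\|{\bf D}_i{\bf v}\|$ vanishes only on $\mathrm{span}\{\bone\}$. Since the recession function of a sum is the sum of the recession functions, the combined recession is strictly positive in every nonzero direction iff $\ker({\bf A})\cap\mathrm{span}\{\bone\} = \{\bzero\}$, i.e. again iff $\bone\not\in\ker({\bf A})$. A proper, closed, convex, coercive function attains its infimum, so a minimizer exists, and Proposition~\ref{th:Eckstein2} then delivers convergence of Algorithm~\ref{alg:ADMM3} to a solution.

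The hard part will be the shared null direction $\bone$: everything turns on recognizing that $\bone\not\in\ker({\bf A})$ is simultaneously the condition that makes ${\bf G}$ injective and the condition that makes the objective coercive. I would therefore be careful to pin down $\ker({\bf D}^h)\cap\ker({\bf D}^v) = \mathrm{span}\{\bone\}$ under periodic boundaries and to translate the single intersection condition $\ker({\bf A})\cap\mathrm{span}\{\bone\} = \{\bzero\}$ cleanly into both the rank and the existence claims.
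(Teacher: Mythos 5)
Your argument is correct and reaches the conclusion by the same overall reduction the paper uses: verify the three hypotheses of Proposition~\ref{th:Eckstein2} (proper/closed/convex components, full column rank of ${\bf G}$, existence of a minimizer) and invoke it. Where you genuinely diverge is in how the two nontrivial hypotheses are discharged. For existence, the paper simply cites \cite{ChambolleLions1997} for the fact that $\bone \not\in \ker({\bf A})$ implies coercivity of the objective in \eqref{eq:unconstrained_TV}; you instead give a self-contained recession-function computation (recession of the quadratic term is $+\infty$ off $\ker({\bf A})$ and $0$ on it, recession of the TV term is itself by positive homogeneity and vanishes exactly on $\mathrm{span}\{\bone\}$, and the recession of the sum is the sum of recessions for these finite-valued convex functions). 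For the rank condition, the paper asserts without proof that ${\bf D}{\bf x} = \bzero \Leftrightarrow {\bf x} = \alpha\bone$, whereas you verify $\ker({\bf D}^h)\cap\ker({\bf D}^v) = \mathrm{span}\{\bone\}$ in the DFT domain via the common zeros of the diagonal symbols $\bDelta^{\! h}$ and $\bDelta^{\! v}$ (only the DC entry). What your version buys is self-containedness and the explicit observation that both hypotheses collapse to the single intersection condition $\ker({\bf A})\cap\mathrm{span}\{\bone\} = \{\bzero\}$, which the paper leaves implicit; what the paper's version buys is brevity. One small omission on your side: when invoking Proposition~\ref{th:Eckstein2} you should state, as the paper does, that Algorithm~\ref{alg:ADMM3} is an \emph{exact} instance of Algorithm~\ref{alg:ADMM2} (all updates computed in closed form, so the error sequences $\eta_k$ and $\rho_k$ of Theorem~\ref{th:Eckstein} may be taken identically zero); Proposition~\ref{th:Eckstein2} is stated for Algorithm~\ref{alg:ADMM2}, and this bridge, though routine, is part of the claim.
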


\begin{proof}
Since $g_1, ...,g_{n+1}$ \eqref{eq:corresp3}--\eqref{eq:corresp3b}
are proper, closed, convex functions, so is the objective function in
\eqref{eq:unconstrained_TV}. That $\bone \not\in \ker({\bf A})$ implies
coercivity of the objective function in \eqref{eq:unconstrained_TV},
thus existence of minimizer(s), was shown in \cite{ChambolleLions1997}.
Matrix ${\bf G} = \bigl[ {\bf A}^*, {\bf D}_1^*,...,{\bf D}_n^*\bigr]^*$
can be written as a permutation of the rows of $\bigl[ {\bf A}^*, {\bf D}^*\bigr]^*$,
where ${\bf D} = \bigl[({\bf D}^h)^*,...,({\bf D}^v)^*\bigr]^*$;
since ${\bf Dx} = \bzero \Leftrightarrow {\bf x} = \alpha\bone$ ($\alpha\in\mathbb{R}$),
if $\bone\not\in\ker({\bf A})$, then ${\bf G}$ has full column rank.
Finally, since Algorithm~\ref{alg:ADMM3} is an instance of Algorithm~\ref{alg:ADMM2}
(with all updates computed exactly), Proposition \ref{th:Eckstein2}
guarantees its convergence to a solution of \eqref{eq:unconstrained_TV}.
\end{proof}

Notice that the condition $\bone \not\in \ker({\bf A})$ is very mild, and
clearly satisfied by the convolution with any low-pass-type filter, such as
those modeling uniform, motion, Gaussian, circular, and other types of
blurs. In fact, for these blurring filters, ${\bf A}\bone = \beta\bone$,
where $\beta$ is the DC gain (typically $\beta = 1$).

\section{Deconvolution with Unknown Boundaries}
\label{sec:proposed}
\subsection{The Observation Model}
\label{sec:proposed_model}
To handle images with unknown boundaries, we model
the boundary pixels as unobserved, which is achieved my modifying
\eqref{eq:observation_BC} into
\begin{equation}
{\bf y} = {\bf M\, A\, x} + {\bf n},\label{eq:model_unknown_BC}
\end{equation}
where ${\bf M} \in \{0,\, 1\}^{m \times n}$ (with $m < n$)  is a masking
matrix, {\it i.e.}, a matrix whose rows are a subset of the rows of an identity
matrix. The role of ${\bf M}$ is to observe only the subset of the
image domain in which the elements of ${\bf Ax}$ do not depend on the
boundary pixels; consequently, the BC assumed for
the convolution represented by ${\bf A}$ is irrelevant, and we may adopt
periodic BCs, for computational convenience.

Assuming that ${\bf A}$ models the convolution with a blurring filter with
a limited support of size $(1+2\, l) \times (1+2\, l)$, and that ${\bf x}$
and ${\bf Ax}$ represent square images of
dimensions $\sqrt{n} \times \sqrt{n}$, then  matrix ${\bf M} \in \mathbb{R}^{ m \times n}$,
with $m=(\sqrt{n} - 2\, l)^2$, represents the removal of a band of width $l$ of
the outermost pixels of the full convolved image ${\bf Ax}$.

Problem \eqref{eq:model_unknown_BC} can be seen as hybrid of deconvolution
and inpainting \cite{Chan2005}, where the missing pixels constitute the
unknown boundary. If  ${\bf M=I}$, model \eqref{eq:model_unknown_BC} reduces to a standard periodic deconvolution
problem. Conversely, if ${\bf A=I}$, \eqref{eq:model_unknown_BC} becomes a pure inpainting problem.
Moreover, the formulation \eqref{eq:model_unknown_BC}
can be used to model problems where not only the boundary, but also other
pixels, are missing, as in standard image inpainting.

The following subsections describe how   to handle observation
model \eqref{eq:model_unknown_BC}, in the context of the ADMM-based
deconvolution algorithms reviewed in the previous section.

\subsection{Frame-Based Synthesis Formulation}
\subsubsection{Mask Decoupling (MD)}
Under observation model \eqref{eq:model_unknown_BC}, the frame-based synthesis formulation
\eqref{eq:unconstrained_synthesis} changes to
\begin{equation}
\widehat{\textbf{z}} = \arg \mathop{\min}_{\textbf{z} \in \mathbb{R}^d} \frac{1}{2}\| \textbf{y} - \textbf{MAWz} \|_2^2  + \lambda \|{\bf z}\|_1.
\label{eq:unconstrained_synthesis_uBC}
\end{equation}
At this point, one could be tempted to map \eqref{eq:unconstrained_synthesis_uBC} into \eqref{eq:unconstrained2}
using the same correspondences as in \eqref{eq:g_1}, \eqref{eq:g_2},
and \eqref{eq:H_2_synth}, and simply changing  \eqref{eq:H_1_synth} into
\begin{align}
& {\bf H}^{(1)} \in \mathbb{R}^{m\times d}, & &   {\bf H}^{(1)} = {\bf MAW}.\label{eq:H_1_synth_uBC}
\end{align}
The problem with this choice is that the matrix to be inverted in line 4 of Algorithm~\ref{alg:ADMM2}
would become
\begin{equation}
\bigl( {\bf W}^* {\bf A}^*{\bf M}^*{\bf M} {\bf A}{\bf W} + (\mu_2/\mu_1)\, {\bf I}\bigr),\label{eq:inverse_synth_old}
\end{equation}
instead of the one in \eqref{eq:inverse_synth2}. The ``trick" used in Subsection~\ref{sec:synth_periodic}
to express this inversion in the DFT domain can no longer be used due to presence of ${\bf M}$,
invalidating the corresponding FFT-based implementation of line 4 of Algorithm~\ref{alg:ADMM2}.
It is clear that the source of the difficulty is the product ${\bf MA}$, which is the
composition of a mask (a spatial point-wise operation) with a circulant matrix (a point-wise
operation in the DFT domain); to  sidestep this difficulty, we need to decouple these two
operations, which is achieved by defining (instead of \eqref{eq:g_1})
\begin{align}
& g_1:\mathbb{R}^n \rightarrow \mathbb{R}, & & g_1({\bf v}) = \frac{1}{2}\|{\bf y}-{\bf M v}\|_2^2,\label{eq:g_1_uBC}
\end{align}
while keeping $g_2$, ${\bf H}^{(1)}$, and ${\bf H}^{(2)}$ as in \eqref{eq:g_2}--\eqref{eq:H_2_synth}.
With this choice, line 4 of Algorithm~\ref{alg:ADMM2} is still given by \eqref{eq:inverse_synth2} (with
its efficient FFT-based implementation); the only change is the proximity operator of the
new $g_1$,
\begin{align}
\mbox{prox}_{g_1/\mu_1}({\bf v}) & = \arg\min_{\bf u} \| {\bf Mu} - {\bf y} \|_2^2 + \mu_1 \| {\bf u} - {\bf v} \|_2^2\\
& = \bigl({\bf M}^*{\bf M} + \mu_1{\bf I}\bigr)^{-1}  \bigl({\bf M}^* {\bf y} + \mu_1 {\bf v} \bigr);
\label{eq:prox_g_1_uBC}
\end{align}
notice that, due to the special
structure of ${\bf M}$,  matrix ${\bf M}^*{\bf M}$ is diagonal, thus the inversion in
\eqref{eq:prox_g_1_uBC} has $O(n)$ cost, the same being true about the
product ${\bf M}^*{\bf y}$, which corresponds to extending the observed
image ${\bf y}$ to the size of ${\bf x}$, by creating a boundary of zeros
around it. Of course, both $\bigl({\bf M}^*{\bf M} + \mu_1{\bf I}\bigr)^{-1}$
and ${\bf M}^*{\bf y}$ can be pre-computed and then used throughout
the algorithm, as long as $\mu_1$ is kept constant. We refer to this approach
as {\it mask decoupling} (MD).

In conclusion, the proposed MD-based ADMM algorithm for image deconvolution with
unknown boundaries, under frame-based synthesis regularization, is Algorithm~\ref{alg:ADMM2}
with line 4 implemented  as in \eqref{eq:inverse_synth2} and the proximity operators
in line 6 given by  \eqref{eq:prox_g_1_uBC} and \eqref{eq:prox_g_2}. We refer to this
algorithm as FS-MD ({\it frame synthesis mask decoupling}).
As with the periodic BC, the leading cost is $O(n \log n)$ per iteration.
Finally, convergence of the FS-MD algorithm  is guaranteed by the following proposition
(the proof of which is similar to that of Proposition \ref{th:prop2}).

\begin{prop} \label{th:prop_synth_uBC}
Algorithm FS-MD ({\it i.e.}, the instance of Algorithm~\ref{alg:ADMM2}
with the definitions in \eqref{eq:g_1_uBC} and  \eqref{eq:g_2}--\eqref{eq:H_2_synth},
with line 4 computed as in \eqref{eq:inverse_synth2}, and the proximity operators in line
6 as given in \eqref{eq:prox_g_1_uBC} and \eqref{eq:prox_g_2}) converges
to a solution of \eqref{eq:unconstrained_synthesis_uBC}.
\end{prop}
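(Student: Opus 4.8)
The plan is to recognize that Algorithm FS-MD is exactly an instance of Algorithm~\ref{alg:ADMM2}, so that convergence follows simply by verifying the three hypotheses of Proposition~\ref{th:Eckstein2}: that $g_1$ and $g_2$ are proper, closed, and convex; that the composite matrix $\mathbf{G}$ has full column rank; and that a minimizer of \eqref{eq:unconstrained_synthesis_uBC} exists. Since the only difference from the periodic-BC synthesis case is the replacement of $g_1$ by the masked data term \eqref{eq:g_1_uBC}, I would structure the argument to mirror the proof of Proposition~\ref{th:prop2}, reusing verbatim every step that does not involve $g_1$.

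First I would check the properness, closedness, and convexity of the new $g_1(\mathbf{v}) = \tfrac{1}{2}\|\mathbf{y} - \mathbf{Mv}\|_2^2$. This is a convex quadratic composed with the linear map $\mathbf{M}$, hence convex; it is finite and continuous on all of $\mathbb{R}^n$, hence both proper and closed. Combined with the fact that $g_2 = \lambda\|\cdot\|_1$ is proper, closed, and convex (already used earlier), this settles the first hypothesis.

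Next I would establish existence of a minimizer via coercivity. The key observation is that coercivity here comes \emph{not} from $g_1$---which, owing to the mask $\mathbf{M}$, is only positive semidefinite and need not be coercive---but entirely from the regularizer. Since $\mathbf{H}^{(2)} = \mathbf{I}$, the term $g_2(\mathbf{H}^{(2)}\mathbf{z}) = \lambda\|\mathbf{z}\|_1$ is itself coercive in $\mathbf{z}$, and adding the nonnegative term $g_1(\mathbf{AWz})$ preserves coercivity. Hence the objective in \eqref{eq:unconstrained_synthesis_uBC} is coercive and its set of minimizers is nonempty. The same choice $\mathbf{H}^{(2)} = \mathbf{I}$ immediately yields that $\mathbf{G}$ has full column rank, since $\mathbf{G}$ stacks $\mathbf{AW}$ on top of the $d\times d$ identity.

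With all three hypotheses verified, I would conclude by invoking Proposition~\ref{th:Eckstein2}. I do not expect any genuine obstacle: the subtlety introduced by the unknown boundary lives entirely in the proximity operator \eqref{eq:prox_g_1_uBC}, which is a purely computational matter handled exactly, and does not touch the abstract convergence conditions. The one place demanding a moment of care is the coercivity step, where one must attribute coercivity to the $\ell_1$ term rather than to the (now only semidefinite) masked data-fidelity term.
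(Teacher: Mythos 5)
Your proposal is correct and follows essentially the same route as the paper: the paper states that the proof of Proposition~\ref{th:prop_synth_uBC} is similar to that of Proposition~\ref{th:prop2}, which verifies exactly your three hypotheses---properness/closedness/convexity of the $g_j$ (with the masked $g_1$ in \eqref{eq:g_1_uBC} still a proper, closed, convex quadratic), coercivity of the objective inherited from $g_2=\lambda\|\cdot\|_1$ with ${\bf H}^{(2)}={\bf I}$ (hence existence of a minimizer), and full column rank of ${\bf G}$ from the identity block---before invoking Proposition~\ref{th:Eckstein2} with exact updates. Your explicit remark that coercivity must come from the $\ell_1$ term, since the masked data term $\frac{1}{2}\|{\bf y}-{\bf M}{\bf A}{\bf W}{\bf z}\|_2^2$ is not coercive, is precisely the point that makes the argument carry over unchanged from the periodic-BC case.
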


\subsubsection{Using the Reeves-\v{S}orel Technique}
\label{sec:FS_CG}
An alternative to the approach just presented of decoupling
the convolution from the masking operator is to use the
method of \cite{Reeves05,Sorel12} to tackle the inversion \eqref{eq:inverse_synth_old}.
Following \cite{Reeves05}, notice that
\begin{equation}
{\bf AW} = {\bf S}\begin{bmatrix} {\bf MAW}\\ {\bf B}\end{bmatrix},\label{eq:Reeves1}
\end{equation}
where ${\bf B}$ contains the rows of ${\bf AW}$ that are missing from ${\bf MAW}$ (recall
that the rows of ${\bf M}$ are a subset of those of an identity matrix) and
${\bf S}$ is a permutation matrix that puts these missing rows in their original positions
in ${\bf AW}$. Noticing that
\begin{align}
{\bf W}^*{\bf A}^*{\bf AW} & = \bigl[ {\bf W}^*{\bf A}^*{\bf M}^* \; ,\; {\bf B}^*\bigr] {\bf S}^*{\bf S} \begin{bmatrix} {\bf MAW}\\ {\bf B}\end{bmatrix}\nonumber\\
& = {\bf W}^*{\bf A}^*{\bf M}^*{\bf MAW} + {\bf B}^*{\bf B}
\end{align}
(${\bf S}$ is a permutation matrix, thus ${\bf S}^*{\bf S}={\bf I}$),
the inverse of \eqref{eq:inverse_synth_old} can be written (with $\gamma = \mu_2/\mu_1$) as
\begin{align}
\left({\bf W}^*{\bf A}^*{\bf M}^*{\bf MAW} + \gamma {\bf I}\right)^{-1}  & =  \left( {\bf W}^*{\bf A}^*
{\bf AW} - {\bf B}^*{\bf B} + \gamma{\bf I}\right)^{-1} \nonumber\\
&\hspace{-1cm}  =  {\bf C} - {\bf C}{\bf B}^* \! \left( {\bf B} {\bf C} {\bf B}^*\!\! -
{\bf I} \right)^{-1} \! {\bf B}{\bf C} ,\label{eq:reeves}
\end{align}
where the second equality results from using the Sherman-Morrison-Woodbury matrix inversion identity,
after defining  ${\bf C} \equiv (\gamma {\bf I} + {\bf W}^*{\bf A}^* {\bf AW})^{-1}$. Since ${\bf A}$ is
circulant, ${\bf C}$ can be efficiently computed via FFT, as explained in \eqref{eq:inverse_synth2}--\eqref{eq:F_synth_FFT}.
The inversion $({\bf B} {\bf C} {\bf B}^*\! - {\bf I} )^{-1}$ in \eqref{eq:reeves} cannot be computed via FFT;
however, its dimension corresponds to the number of unknown boundary pixels (number of rows in
${\bf B}$), usually much smaller than image itself. As in \cite{Reeves05,Sorel12}, we use the
CG algorithm to solve the corresponding system; we confirmed experimentally that (as in
\cite{Sorel12}) taking only one CG iteration (initialized with the estimate of the previous
outer iteration) yields the fastest convergence, without degrading the final result.
Thus, in our experiments, we use a single CG iteration per outer iteration.

Approximately solving line 4 of Algorithm~\ref{alg:ADMM2} via one (or even
more) iterations of the CG algorithm, rather than an FFT-based exact solution,
makes  convergence more difficult to analyze, so we will not present a formal
proof. In a related problem \cite{FigBioucas_10}, it was shown experimentally
that if the iterative solvers used to implement the minimizations defining the
ADMM steps are warm-started ({\it i.e.}, initialized with the values from the
previous outer iteration), then the error sequences $\eta_k$ and $\rho_k$,
for $k=0,1,2,...$ are absolutely summable as required by Theorem~\ref{th:Eckstein}.
Finally, we refer to this algorithm as FS-CG ({\it frame synthesis conjugate gradient}).

\subsection{Frame-Based Analysis Formulation}
\subsubsection{Mask Decoupling (MD)}\label{sec:WAV_A_MD}
The frame-based analysis formulation corresponding to observation
model \eqref{eq:unconstrained_synthesis} is
\begin{equation}
\widehat{\bf x} = \arg \mathop{\min}_{{\bf x} \in \mathbb{R}^n} \frac{1}{2}\| {\bf y} - {\bf MAx} \|_2^2  + \lambda \|{\bf Px}\|_1.
\label{eq:unconstrained_analysis_uBC}
\end{equation}
Following the MD approach introduced for the synthesis formulation,
we map Problem \eqref{eq:unconstrained_analysis_uBC} into the form \eqref{eq:unconstrained2},
by using $g_1$ as defined in \eqref{eq:g_1_uBC}, and keeping ${\bf H}^{(1)}$, ${\bf H}^{(2)}$,
and $g_2$ as in the periodic BC case: \eqref{eq:H_1_analysis}, \eqref{eq:H_2_analysis}, and \eqref{eq:g_2},
respectively.

The only difference in the resulting instance of Algorithm~\ref{alg:ADMM2} is the use
of the proximity operator of the new $g_1$, as given in \eqref{eq:prox_g_1_uBC}.
In conclusion, the proposed ADMM-based algorithm for image deconvolution with
unknown boundaries, under frame-based analysis regularization, is simply
Algorithm~\ref{alg:ADMM2}, with line 4 implemented  as in \eqref{eq:invAnalysis}--\eqref{eq:invAnalysisp1},
and the proximity operators in line 6 given by \eqref{eq:prox_g_1_uBC} and
\eqref{eq:prox_g_2}. We refer to this algorithm as FA-MD ({\it frame analysis mask decoupling}).
The computational cost of the algorithm is $O(n \log n)$
per iteration, as in the periodic BC case. Convergence of FA-MD is
addressed by the following proposition (the proof of which is
similar to that of Proposition \ref{th:prop_analysis}).

\begin{prop} \label{th:prop_analysis_uBC}
Algorithm FA-MD ({\it i.e.}, Algorithm~\ref{alg:ADMM2} with the definitions in
 \eqref{eq:g_1_uBC}, \eqref{eq:H_1_analysis}, \eqref{eq:H_2_analysis}, and \eqref{eq:g_2},
with line 4 computed as in \eqref{eq:invAnalysis}--\eqref{eq:invAnalysisp1},
and the proximity operators in line 6 as given in \eqref{eq:prox_g_1_uBC} and \eqref{eq:prox_g_2})
converges to a solution of \eqref{eq:unconstrained_analysis_uBC}.
\end{prop}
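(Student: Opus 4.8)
The plan is to prove convergence exactly as in the proof of Proposition~\ref{th:prop_analysis}, namely by checking that the hypotheses of Proposition~\ref{th:Eckstein2} hold for the instance of Algorithm~\ref{alg:ADMM2} defined by \eqref{eq:g_1_uBC}, \eqref{eq:H_1_analysis}, \eqref{eq:H_2_analysis}, and \eqref{eq:g_2}. Concretely, I would verify three things: that $g_1$ and $g_2$ are proper, closed, and convex; that the stacked matrix ${\bf G} = \bigl[ {\bf A}^*, {\bf P}^*\bigr]^*$ has full column rank; and that a minimizer of \eqref{eq:unconstrained_analysis_uBC} exists. Since line 4 is computed exactly via \eqref{eq:invAnalysis}--\eqref{eq:invAnalysisp1} and line 6 applies the exact proximity operators \eqref{eq:prox_g_1_uBC} and \eqref{eq:prox_g_2}, the inexactness sequences $\eta_k$ and $\rho_k$ of Theorem~\ref{th:Eckstein} can be taken identically zero, so no approximation error needs to be controlled.

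The only ingredient that differs from Proposition~\ref{th:prop_analysis} is that $g_1$ now carries the mask ${\bf M}$. I would therefore first observe that $g_1({\bf v}) = \frac{1}{2}\|{\bf y} - {\bf Mv}\|_2^2$ is the composition of the convex quadratic $\frac{1}{2}\|{\bf y}-\cdot\|_2^2$ with the linear map ${\bf v}\mapsto{\bf Mv}$; being finite and continuous everywhere, it is proper, closed, and convex. The remaining data, $g_2 = \lambda\|\cdot\|_1$, ${\bf H}^{(1)}={\bf A}$, and ${\bf H}^{(2)}={\bf P}$, are literally unchanged from the periodic analysis case, so they need no re-examination.

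Full column rank and existence of a minimizer then follow exactly as before. Because ${\bf P}$ is the analysis operator of a tight frame, ${\bf P}^*{\bf P}={\bf I}$, so $\ker({\bf P})=\{\bzero\}$ and ${\bf H}^{(2)}={\bf P}$ has full column rank; stacking it with ${\bf A}$ preserves this, so ${\bf G}$ has full column rank (which, together with $\mu_1,\mu_2>0$, also guarantees the inverse in \eqref{eq:invAnalysis} is well defined). For coercivity, ${\bf P}^*{\bf P}={\bf I}$ gives $\|{\bf Px}\|_2=\|{\bf x}\|_2$, whence $\lambda\|{\bf Px}\|_1 \geq \lambda\|{\bf Px}\|_2 = \lambda\|{\bf x}\|_2 \to \infty$ as $\|{\bf x}\|_2\to\infty$; since the data term is nonnegative, the objective in \eqref{eq:unconstrained_analysis_uBC} is coercive and its set of minimizers is nonempty. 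With all hypotheses verified, Proposition~\ref{th:Eckstein2} applies and yields convergence of the sequence generated by Algorithm~\ref{alg:ADMM2} to a solution of \eqref{eq:unconstrained_analysis_uBC}.

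The main point to watch --- and the only place the mask could conceivably cause trouble --- is the existence of a minimizer: because ${\bf M}$ merely selects a subset of rows, it has a nontrivial kernel, so $g_1$ is flat along $\ker({\bf M})$ and is no longer coercive on its own. This, however, is harmless here, since coercivity of the full objective is supplied entirely by the frame-analysis regularizer $\lambda\|{\bf Px}\|_1$ and not by the data term; the argument from Proposition~\ref{th:prop_analysis} therefore transfers verbatim. In short, I expect no genuine obstacle: the proof is a near-copy of the periodic-BC analysis case, the single new verification being the convexity and closedness of the masked data term $g_1$.
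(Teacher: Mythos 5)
Your proposal is correct and takes essentially the same approach as the paper: the paper gives no separate proof, stating only that it is similar to that of Proposition~\ref{th:prop_analysis}, and your argument is exactly that proof transplanted, with the one genuinely new verification (that the masked data term \eqref{eq:g_1_uBC} is proper, closed, and convex, while coercivity is supplied entirely by the regularizer via $\ker({\bf P})=\{\bzero\}$) carried out correctly. Your observation that ${\bf H}^{(1)}={\bf A}$ stays circulant under mask decoupling, so line 4 remains exact and $\eta_k=\rho_k=0$ in Theorem~\ref{th:Eckstein}, matches the paper's reasoning precisely.
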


\subsubsection{Using the Reeves-\v{S}orel Technique}
\label{sec:WAV_A_CG}
Consider Problem \eqref{eq:unconstrained_analysis_uBC} and map into the
form \eqref{eq:unconstrained2} using \eqref{eq:g_1}--\eqref{eq:g_2}, \eqref{eq:H_2_analysis},
and
\begin{equation}
{\bf H}^{(1)} \in \mathbb{R}^{n\times n},\; \;\; \; \;\;   {\bf H}^{(1)} = {\bf MA}.\label{eq:H_1_analysis_sorel}
\end{equation}
The matrix inverse  computed in line 4 of Algorithm~\ref{alg:ADMM2} is now (with $\gamma = \mu_2/\mu_1$)
\begin{equation}
\Bigl( {\bf A}^*{\bf M}^* {\bf M A} + \gamma\, {\bf P}^*{\bf P}\Bigr)^{-1} =
\Bigl( {\bf A}^*{\bf M}^* {\bf M A} + \gamma\, {\bf I}\Bigr)^{-1},\label{eq:invAnalysis_sorel}
\end{equation}
which can no longer be computed as in \eqref{eq:invAnalysisp1}, since matrix
${\bf M A}$ is not circulant. Using the same steps as
in \eqref{eq:Reeves1}--\eqref{eq:reeves}, with ${\bf A}$ replacing ${\bf AW}$
and ${\bf C} \equiv (\gamma {\bf I} + {\bf A}^* {\bf A})^{-1}$, leads to
\begin{equation}
\left({\bf A}^*{\bf M}^*{\bf MA} + \gamma {\bf I}\right)^{-1}  =
{\bf C} - {\bf C}{\bf B}^* \! \left( {\bf B} {\bf C} {\bf B}^*\!\! - {\bf I} \right)^{-1} \! {\bf B}{\bf C} ,\label{eq:reeves3}
\end{equation}
Since ${\bf A}$ is circulant, ${\bf C} = (\gamma {\bf I} + {\bf A}^* {\bf A})^{-1}$
can be efficiently computed via FFT, as in \eqref{eq:invAnalysisp1}. As in the
synthesis case, the inverse $({\bf B} {\bf C} {\bf B}^*\! - {\bf I} )^{-1}$ in \eqref{eq:reeves3}
is computed approximately by taking one (warm-started) CG iteration (for the reason
explained in Subsection~\ref{sec:FS_CG}).
We refer to the resulting algorithm as FA-CG ({\it frame analysis conjugate gradient}).

\subsection{TV-Based Deconvolution with Unknown Boundaries}
\subsubsection{Mask Decoupling (MD)}
\label{sec:TV_MD}
Given the observation model in \eqref{eq:model_unknown_BC}, TV-based
deconvolution with unknown boundaries is formulated as
\begin{equation}
\widehat{\textbf{x}} = \arg \min_{\textbf{x} \in \mathbb{R}^n} \frac{1}{2}\| \textbf{y} - {\bf M\, A\, x} \|_2^2  + \lambda \sum_{i=1}^n \|\textbf{D}_i \; \textbf{x}\|_2 ,
\label{eq:unconstrained_TV_uBC}
\end{equation}
with every element having the exact same meanings as above.

Following the MD approach, we map \eqref{eq:unconstrained_TV_uBC} into the form \eqref{eq:unconstrained2}
using $g_1$ as defined in \eqref{eq:g_1_uBC} and keeping all the other correspondences
\eqref{eq:corresp3b}--\eqref{eq:corresp3d} unchanged.
The only resulting change to Algorithm \ref{alg:ADMM3} is in line 7, which becomes
\begin{equation}
\textbf{u}_{k+1}^{(1)} \leftarrow   \bigl({\bf M}^*{\bf M} + \mu_1{\bf I}\bigr)^{-1}
\Bigl( {\bf M}^* {\bf y} + \mu_1 \bigl(
{\bf A} \; {\bf z}_{k+1} - {\bf d}_k^{(1)}\bigr)\Bigr), \label{eq:new_line}
\end{equation}
as a consequence of the application of the proximity operator of $g_1$, as given in
\eqref{eq:prox_g_1_uBC}. In summary, the ADMM-based algorithm for TV-based deconvolution
with unknown boundaries has the exact same structure as Algorithm~\ref{alg:ADMM3},
with line 7 replaced by \ref{eq:new_line}. We refer to this algorithm as TV-MD
({\it TV mask decoupling}). Finally, convergence of TV-MD is addressed in
the following proposition (the proof of which is similar to that of
Proposition~\ref{th:propTV}).

\begin{prop}
If $\bone\not\in\ker({\bf MA})$, then TV-MD  ({\it i.e.}, the version of
Algorithm~\ref{alg:ADMM3} with line 7 replaced by \eqref{eq:new_line})
converges to a solution of \eqref{eq:unconstrained_TV_uBC}.
\end{prop}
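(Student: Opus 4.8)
The plan is to mirror the proof of Proposition~\ref{th:propTV}, verifying the three hypotheses required by Proposition~\ref{th:Eckstein2} for the instance of Algorithm~\ref{alg:ADMM2} that TV-MD realizes: (i) the component functions are proper, closed, and convex; (ii) the matrix ${\bf G}$ has full column rank; and (iii) a minimizer of \eqref{eq:unconstrained_TV_uBC} exists. Claim (i) is immediate: with $g_1$ as in \eqref{eq:g_1_uBC} (a quadratic composed with the linear map ${\bf M}$) and $g_2,\dots,g_{n+1}$ as in \eqref{eq:corresp3b}, all the $g_j$ are proper, closed, and convex, hence so is the objective in \eqref{eq:unconstrained_TV_uBC}.

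For (ii), I would observe that the MD formulation leaves the matrices ${\bf H}^{(j)}$ exactly as in the periodic case \eqref{eq:corresp3c}--\eqref{eq:corresp3d} (the mask ${\bf M}$ having been absorbed into $g_1$); therefore ${\bf G}=\bigl[{\bf A}^*,{\bf D}_1^*,\dots,{\bf D}_n^*\bigr]^*$ is the very same matrix as in Proposition~\ref{th:propTV}. As shown there, ${\bf G}$ has full column rank whenever $\bone\not\in\ker({\bf A})$, and this follows from our hypothesis because $\ker({\bf A})\subseteq\ker({\bf MA})$, so $\bone\not\in\ker({\bf MA})$ implies $\bone\not\in\ker({\bf A})$.

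The crux is (iii), where the hypothesis $\bone\not\in\ker({\bf MA})$ enters in an essential way. Since the TV seminorm has null space $\text{span}\{\bone\}$, it is not coercive, and coercivity of the full objective must come from the interplay between the data term and the regularizer. I would split any ${\bf x}$ as ${\bf x}={\bf x}_0+\alpha\bone$ with ${\bf x}_0\perp\bone$. Adding a constant leaves the differences unchanged, so $\sum_i\|{\bf D}_i{\bf x}\|_2=\sum_i\|{\bf D}_i{\bf x}_0\|_2$, and, restricted to the subspace orthogonal to $\bone$, the TV seminorm is a genuine norm (a discrete Poincar\'e inequality, using ${\bf Dx}=\bzero\Leftrightarrow{\bf x}=\alpha\bone$), hence bounded below by $c\|{\bf x}_0\|_2$ for some $c>0$. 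Suppose, for contradiction, that the objective stays bounded along a sequence with $\|{\bf x}\|_2\to\infty$. Boundedness of the TV term forces $\|{\bf x}_0\|_2$ to stay bounded, so $|\alpha|\to\infty$; but then, since ${\bf MA}{\bf x}_0$ remains bounded and ${\bf MA}\bone\neq\bzero$, the data term $\tfrac12\|{\bf y}-{\bf MA}{\bf x}_0-\alpha\,{\bf MA}\bone\|_2^2$ must diverge, a contradiction. Thus the objective is coercive, and being also proper, closed, and convex, it attains its minimum \cite{CombettesSIAM}.

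Finally, TV-MD is exactly Algorithm~\ref{alg:ADMM3} with line 7 replaced by \eqref{eq:new_line}, which is nothing but the exact proximity operator of the new $g_1$; every update is therefore computed exactly, so TV-MD is a bona fide instance of Algorithm~\ref{alg:ADMM2}. Having verified (i)--(iii), Proposition~\ref{th:Eckstein2} applies and guarantees convergence of the iterates to a solution of \eqref{eq:unconstrained_TV_uBC}. The main obstacle is step (iii): one must correctly isolate the null direction of TV and show that the masked data term still penalizes it, which is precisely the role of the strengthened hypothesis $\bone\not\in\ker({\bf MA})$ (as opposed to $\bone\not\in\ker({\bf A})$ in the periodic case).
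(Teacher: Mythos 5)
Your proof is correct, and its overall skeleton is exactly the one the paper intends: the paper does not spell out a proof for this proposition, stating only that it is ``similar to that of Proposition~\ref{th:propTV},'' and that template is precisely what you follow --- verify that the $g_j$ are proper, closed, and convex, that ${\bf G}$ has full column rank, that a minimizer exists, and that every ADMM update is computed exactly (line 7 being the exact proximity operator \eqref{eq:prox_g_1_uBC} of the new $g_1$), then invoke Proposition~\ref{th:Eckstein2}. The one place where you genuinely diverge is the existence/coercivity step: the paper's Proposition~\ref{th:propTV} simply cites \cite{ChambolleLions1997} for the fact that $\bone\not\in\ker({\bf A})$ (here, $\bone\not\in\ker({\bf MA})$) implies coercivity, whereas you prove it directly --- decomposing ${\bf x}={\bf x}_0+\alpha\bone$ with ${\bf x}_0\perp\bone$, using the discrete Poincar\'e-type bound $\sum_i\|{\bf D}_i{\bf x}_0\|_2\geq c\|{\bf x}_0\|_2$ (valid by homogeneity and compactness, since ${\bf Dx}=\bzero\Leftrightarrow{\bf x}=\alpha\bone$), and showing that along any unbounded sequence with bounded objective one must have $|\alpha|\to\infty$, which makes the masked data term $\frac12\|{\bf y}-{\bf MA}{\bf x}_0-\alpha\,{\bf MA}\bone\|_2^2$ diverge because ${\bf MA}\bone\neq\bzero$. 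This buys you a self-contained, elementary argument in place of an external reference, and it isolates exactly why the hypothesis must be strengthened from $\bone\not\in\ker({\bf A})$ to $\bone\not\in\ker({\bf MA})$. You also make explicit a small point the paper leaves implicit: since the MD formulation keeps ${\bf H}^{(1)}={\bf A}$ (the mask is absorbed into $g_1$), ${\bf G}$ is unchanged from the periodic case, and its full column rank follows from $\ker({\bf A})\subseteq\ker({\bf MA})$, so the stated hypothesis covers it a fortiori. No gaps; if anything, your write-up is more complete than the paper's one-line pointer.
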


As for periodic BCs, the condition $\bone\not\in\ker({\bf MA})$
is very mild, and is satisfied by the convolution with any low-pass filter
(for which ${\bf A}\bone = \beta\bone$, where $\beta$ is the DC gain, typically $\beta\simeq 1$) and
${\bf M}\bone \neq 0$, if $m>1$ (at least one pixel is observed).

\vspace{0.4cm}
\subsubsection{Using the Reeves-\v{S}orel Technique}
\label{sec:TV_CG}
To use the Reeves-\v{S}orel technique to handle \eqref{eq:unconstrained_TV_uBC},
the mapping to the form \eqref{eq:unconstrained2} is done as in \eqref{eq:corresp3},
\eqref{eq:corresp3b}, and \eqref{eq:corresp3d}, with \eqref{eq:corresp3c} replaced
by \eqref{eq:H_1_analysis_sorel}. The consequence is a simple
modification of Algorithm~\ref{alg:ADMM3}, where ${\bf A}$ is replaced by ${\bf MA}$,
in line 8, while in line 6,  ${\bf A}^*$ is replaced by ${\bf A}^*{\bf M}^*$ and
${\bf K}$ is redefined (instead of \eqref{eq:fast_inversion}) as the inverse of the
matrix in \eqref{eq:matrix_periodic}, {\it i.e.} (with $\gamma = \mu_2/\mu_1$),
\begin{equation}
{\bf K} = \frac{1}{\mu_1}\Bigl( {\bf A}^*{\bf M}^*{\bf MA} + \gamma
 ({\bf D}^h)^*{\bf D}^h + \gamma ({\bf D}^v)^*{\bf D}^v \Bigr)^{-1} .
\end{equation}
A derivation similar to \eqref{eq:Reeves1}--\eqref{eq:reeves} allows writing
\begin{equation}
{\bf K}  = \frac{1}{\mu_1} \Bigl(  {\bf C} - {\bf C}{\bf B}^* \! \left( {\bf B} {\bf C} {\bf B}^*\!\! - {\bf I} \right)^{-1} \! {\bf B}{\bf C}\Bigr),
\label{eq:ReevesTV}
\end{equation}
where
\begin{equation}
{\bf C} = \bigl( {\bf A}^*{\bf A} + \gamma ({\bf D}^h)^*{\bf D}^h + \gamma ({\bf D}^v)^*{\bf D}^v\bigr)^{-1}.
\end{equation}
The circulant nature of ${\bf A}$, ${\bf D}^h$, and ${\bf D}^v$ allows computing ${\bf C}$
efficiently in the DFT domain (similarly to \eqref{eq:fast_inversion}). Finally, as above,
the inverse  $({\bf B} {\bf C} {\bf B}^*\! - {\bf I} )^{-1}$ in \eqref{eq:ReevesTV}
is approximated by taking, as above, one CG iterations; the resulting algorithm
is referred to as TV-CG ({\it TV conjugate gradient}).

\section{Experiments}
\label{sec:Exp}

In the experiments herein reported, we use the benchmark images {\it Lena} and
{\it cameraman} (of size $256\times 256$), with 4 different blurs (out-of-focus, linear motion,
uniform, and Gaussian), all of size $19\times19$ ({\it i.e.}, $(2\, l + 1)\times(2\, l + 1)$,
with $l=9$, as explained in  Section~\ref{sec:proposed_model}), at four different
BSNRs (\textit{blurred signal to noise ratio}): 30dB, 40dB, 50dB, and 60dB.
The reason why we concentrate on large blurs is that the effect of the boundary
conditions is very evident in this case.

To set up a scenario of unknown boundaries, the observed images (of size
$238\times 238$, since  $238 = 256 - 18$) are obtained by applying the
filters to the  $238\times 238$ central region of the original
images, using the band of pixels (of width 9) around this region as the boundary pixels.
In the proposed formulation, these boundary pixels are modeled as unknown. Notice that this procedure
is equivalent to convolving the full ($256\times 256$) images using an arbitrary
BC (periodic, for computational convenience) and then keeping only the valid region
({\it i.e.}, the one that does not depend on the BC). Since  $n = 256^2 = 65536$
and $m=238^2=56644$, the number of unknown boundary pixels is 8892.

Preliminary tests showed that both TV and frame-based analysis regularization
slightly (but consistently) outperform the frame-based synthesis formulation,
in terms of ISNR (\textit{improvement in signal to noise ratio}),
thus we will not report experiments with the latter. Concerning TV regularization,
 we consider four algorithms: (i) TV-MD (proposed in Subsection~\ref{sec:TV_MD}); (ii)
TV-CG (proposed in Subsection~\ref{sec:TV_CG}); (iii)
Algorithm \ref{alg:ADMM3} with the (wrong) assumption of periodic BC (referred
to as TV-BC); and (iv) Algorithm \ref{alg:ADMM3} also with the (wrong) assumption
of periodic BC, after pre-processing the observed image with the ``edgetaper" (ET) function
(referred to as TV-ET). Similarly, in  the frame-based\footnote{In the frame-based approach,
we use a  redundant Haar frame, with four levels, although we could use any other
frame with fast transforms.} analysis formulation, we  consider four algorithms:
(i) FA-MD (proposed in Subsection~\ref{sec:WAV_A_MD}); (ii) FA-CG
(proposed in Subsection~\ref{sec:WAV_A_CG}); (iii) the algorithm defined in
Subsection~\ref{sec:WAV_A_BC}, which wrongly assumes periodic BC (referred to
as FA-BC); and (iv) FA-BC, after pre-processing the observed image with the
``edgetaper" (ET) function (referred to as FA-ET).
Notice that the methods that assume periodic BC recover images of the same size
as the observed image ($\sqrt{m}\times\sqrt{m}$), while those based on
model \eqref{eq:model_unknown_BC} recover full images ($\sqrt{n}\times\sqrt{n}$),
extended with the estimated boundary pixels.

The regularization parameter $\lambda$ was manually adjusted to yield the best
ISNR for each experimental condition. For the ADMM parameters ($\mu_1$ and $\mu_2$),
which affect the convergence speed, we used heuristic rules, which lead to a good
performance of the algorithm: $\mu_2=10\lambda$ and $\mu_1 =\min\{1,5000\mu_1\}$.


Fig.~\ref{fig:Lena_uniform} shows the results obtained with the Lena image,
with uniform blur at 60dB BSNR, using frame-based analysis regularization.
Notice how the wrong assumption of periodic BC
causes a complete failure of the deblurring algorithm, while treating the
boundary as unknown allows the algorithm to obtain a remarkably good estimate
of the unobserved boundary pixels, as well as a deconvolved image without any
boundary artifacts. Fig.~\ref{fig:Cameraman_motion} illustrates
a similar experiment, now with the {\it cameraman} image and linear motion
blur at 40dB BSNR; the conclusions are similar to those drawn from
Fig.~\ref{fig:Lena_uniform}, with exception that, in this case, the
edgetaper-based approach is able to yield a reasonable image estimate,
although far from those produced by TV-CG and TV-MD.

\begin{figure}[th]
	\centering
	 \begin{tabular}{cc}
   \includegraphics[width=0.16\textheight]{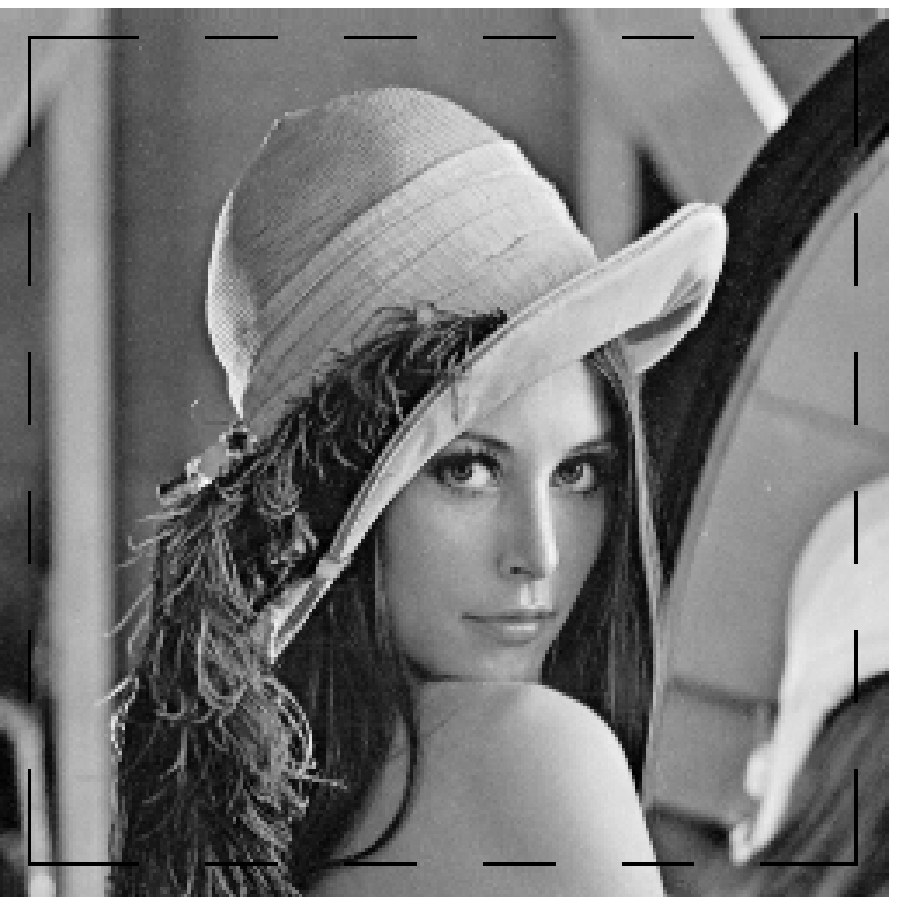} &
   \includegraphics[width=0.16\textheight]{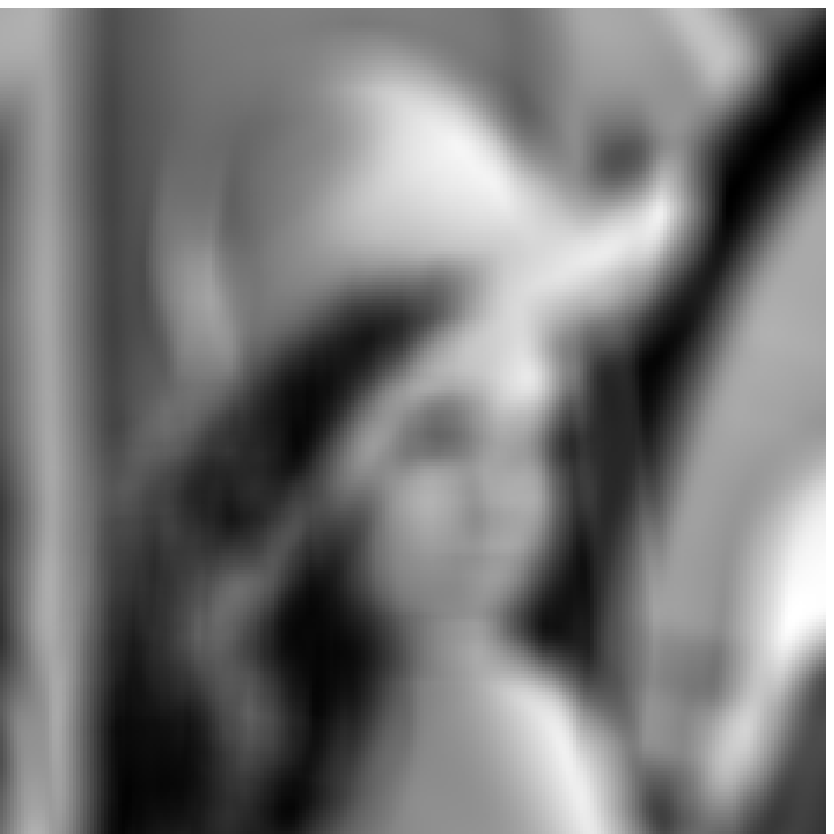} \\
   \vspace{0.15cm} {\small original  ($256\times 256$)}&  {\small observed ($238\times 238$)}\\
   \includegraphics[width=0.16\textheight]{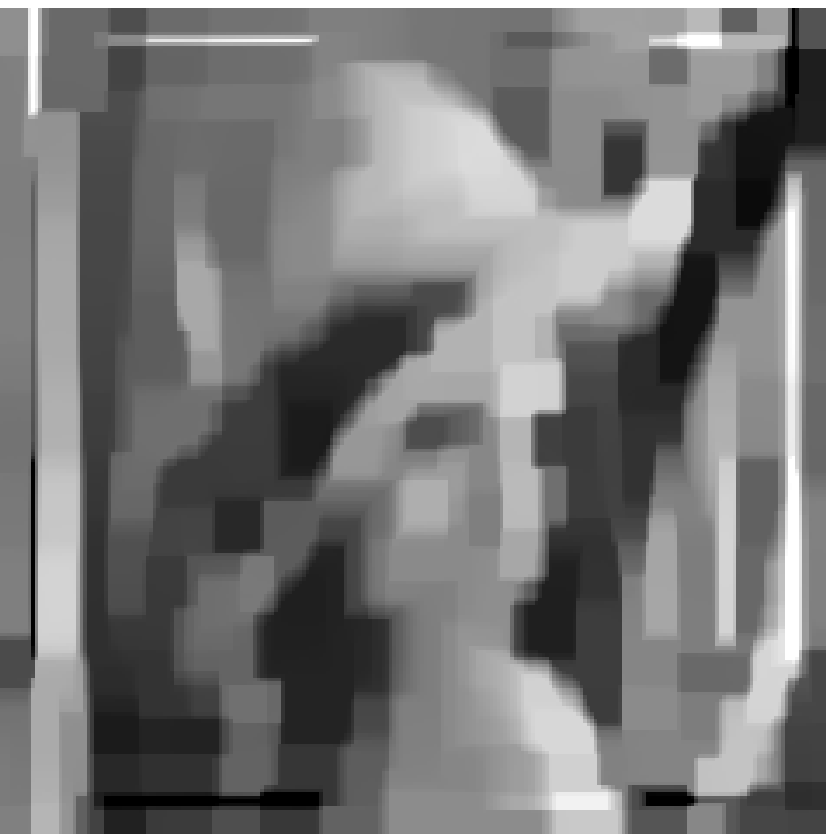} &
   \includegraphics[width=0.16\textheight]{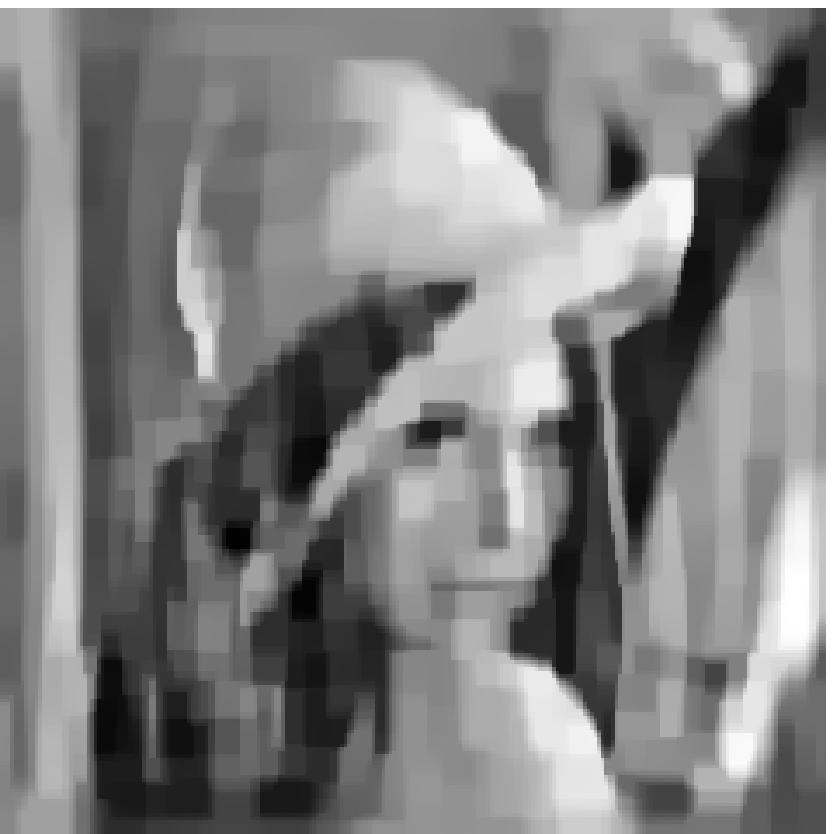} \\
   \vspace{0.25cm} {\small FA-BC (ISNR = -2.52dB)} &  {\small FA-ET (ISNR = 3.06dB)}  \\
  \includegraphics[width=0.16\textheight]{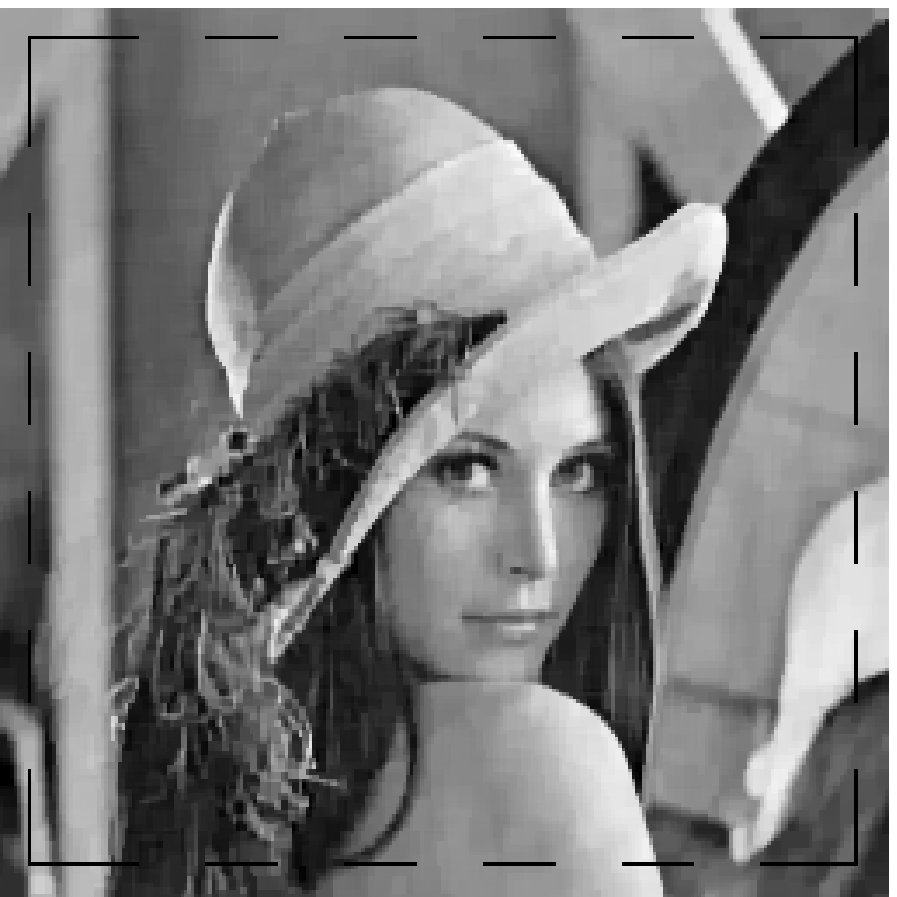}&
    \includegraphics[width=0.16\textheight]{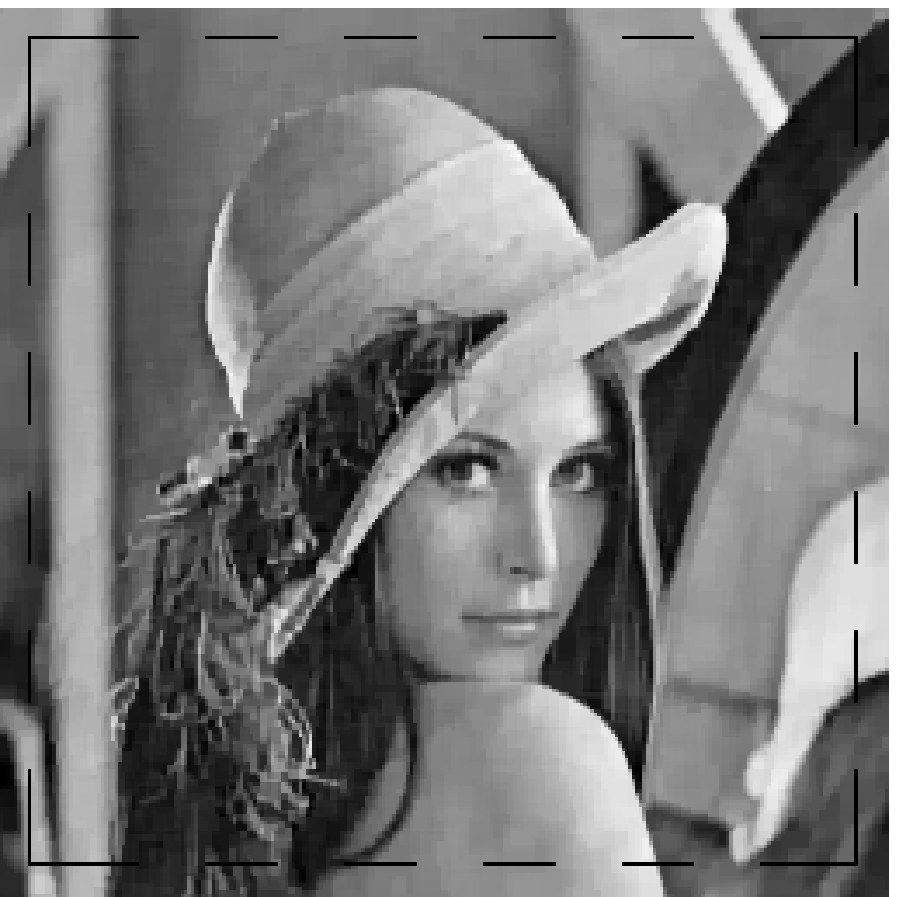} \\
   \vspace{0.15cm} {\small FA-CG (ISNR = 10.21dB)} &  {\small FA-MD (ISRN = 10.63dB)}
   	 \end{tabular}
\caption{Results obtained on the {\it Lena} image, degraded by a uniform $19 \times 19$ blur, at 60dB BSNR,
by the four algorithms considered (see text for the acronyms). Notice that FA-BC and FA-ET produce
$238\times 238$ images, while FA-CG and FA-MD yield $256\times 256$ images; the dashed square
shows the limit of the boundary region.}
\label{fig:Lena_uniform}
\end{figure}

\begin{figure}[h]
	\centering
	 \begin{tabular}{cc}
   \includegraphics[width=0.16\textheight]{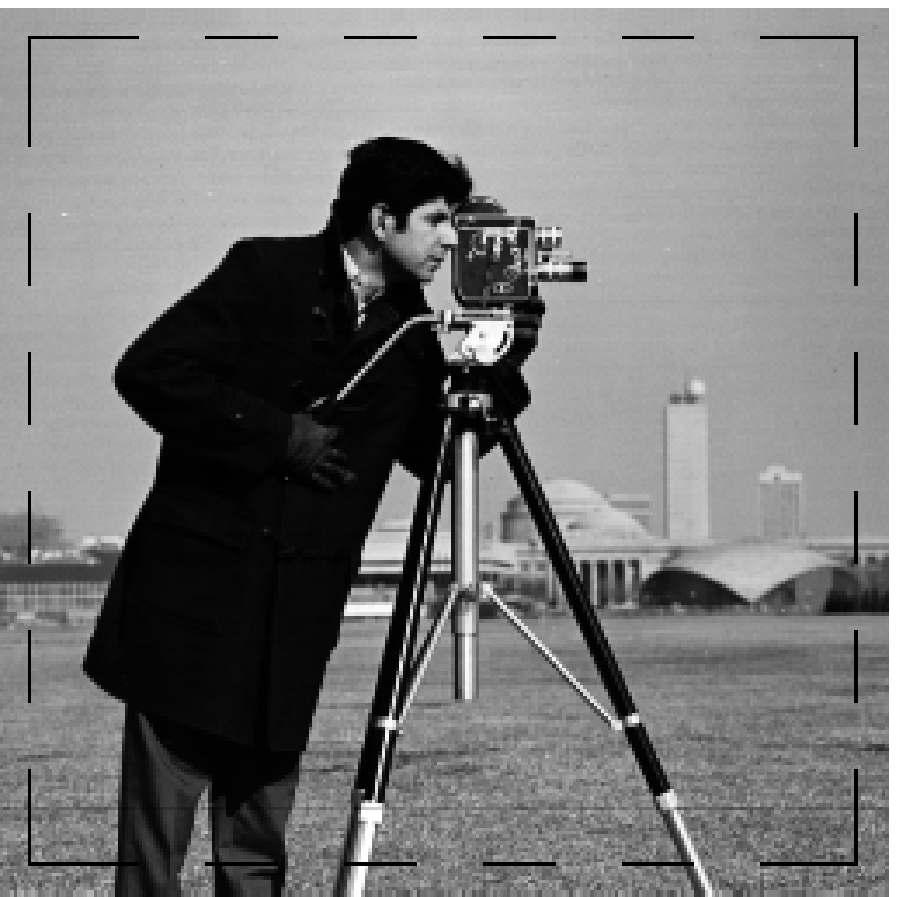} &
   \includegraphics[width=0.16\textheight]{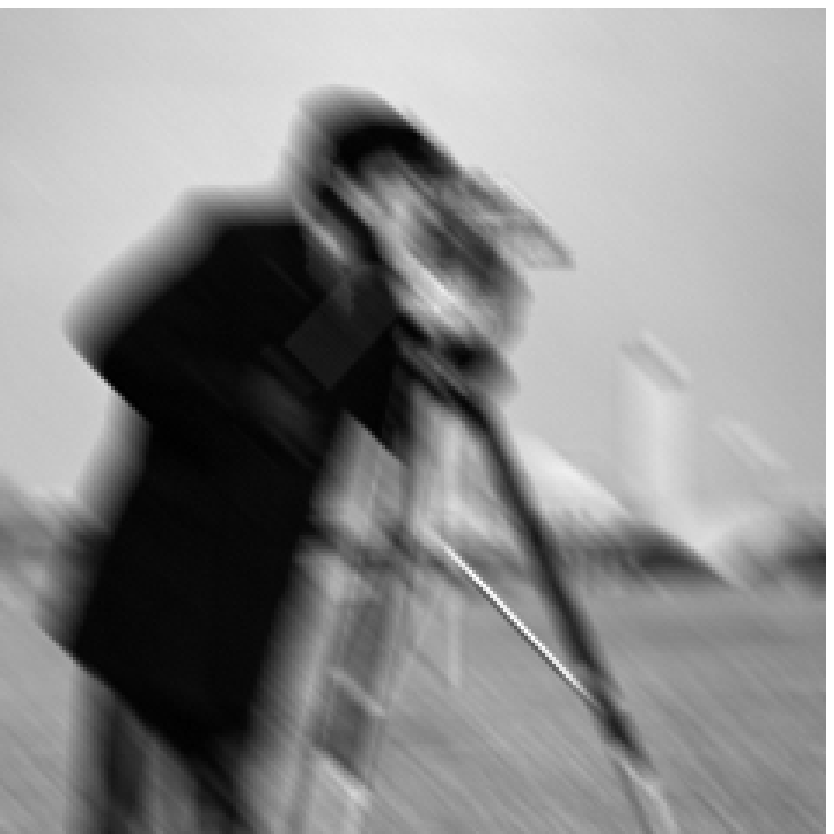} \\
   \vspace{0.15cm} {\small original  ($256\times 256$)}&  {\small observed ($238\times 238$)}\\
   \includegraphics[width=0.16\textheight]{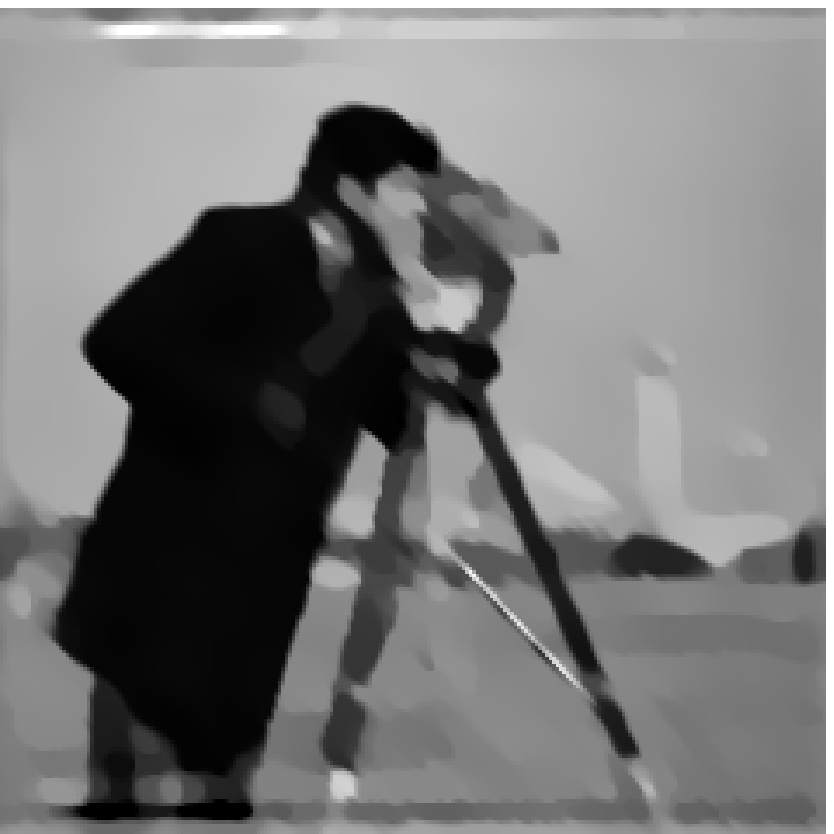} &
   \includegraphics[width=0.16\textheight]{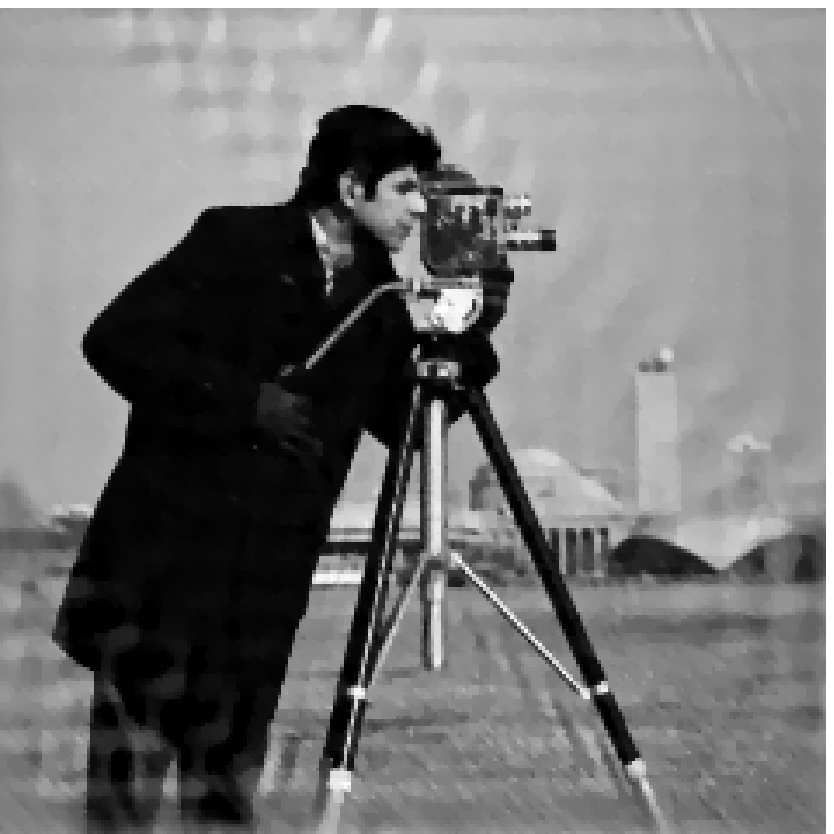} \\
   \vspace{0.25cm}  {\small TV-BC (ISNR = 0.91dB)} &  {\small TV-ET  (ISNR = 9.38dB) } \\
  \includegraphics[width=0.16\textheight]{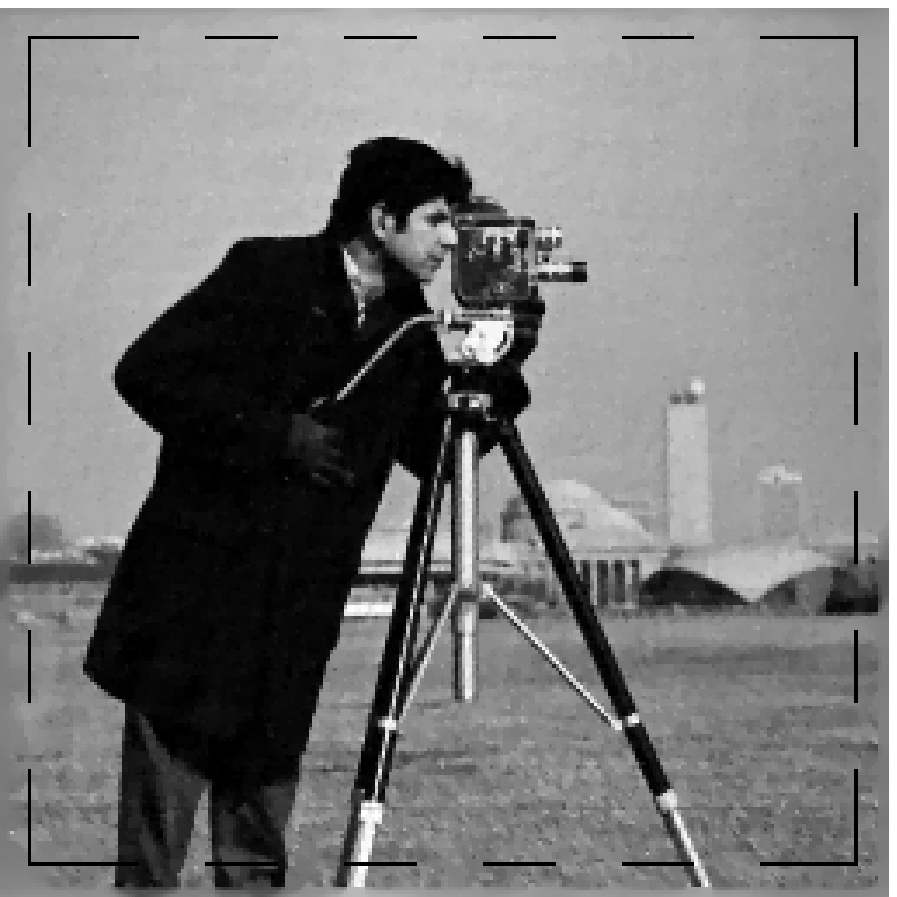}&
    \includegraphics[width=0.16\textheight]{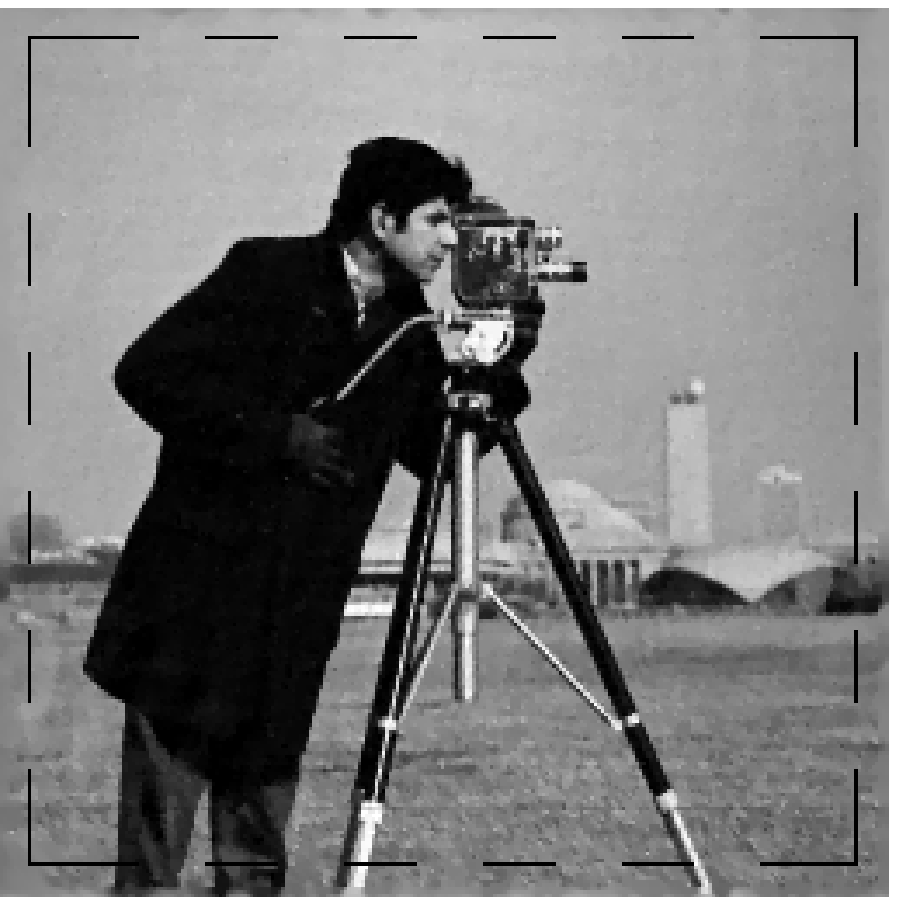} \\
   \vspace{0.15cm}  {\small TV-CG  (ISNR = 12.58dB)} &  {\small TV-MD  (ISNR = 12.59dB)}
   	 \end{tabular}
\caption{Results obtained on the {\it cameraman} image, degraded by a $19 \times 19$ linear motion
blur, at 40dB BSNR, by the four algorithms considered (see text for the acronyms).
Notice that FA-BC and FA-ET produce $238\times 238$ images, while FA-CG and FA-MD yield $256\times 256$
images; the dashed square shows the limit of the boundary region.}
\label{fig:Cameraman_motion}
\end{figure}

Tables~\ref{tab:TV_ISNR} and \ref{tab:WAV_ISNR} show the ISNR values achieved
by the four algorithms mentioned above, for TV and frame-based analysis
regularization, respectively. Naturally, the methods that treat the boundary pixels
as unknown (the CG and MD versions) yield very similar ISRN values,  since they simply
corresponds to two variants of an algorithm minimizing the same objective function.
It is also clear that by wrongly assuming a periodic BC (algorithms TV-BC
and FA-BC) leads to very poor results, as already illustrated in Figs.\ref{fig:Lena_uniform}
and \ref{fig:Cameraman_motion}, and that the edgetaper method is able to alleviate
this problem somewhat. Finally, we observe that frame-based analysis regularization
yields marginally better results than TV regularization.

\begin{table*}[htbp]
  \centering
  \caption{ISNR values (in {\rm dB}) obtained by the four algorithms tested (see the acronyms in the text)
  for TV-based image deblurring.}
    \begin{tabular}{lr|cccc|cccc|}
         				&		 & \multicolumn{4}{c|}{\bf Cameraman}  & \multicolumn{4}{c|}{\bf Lena} \\
   \multicolumn{1}{c}{\bf Blur} & {\bf BSNR}     & {TV-BC} &  {TV-ET}   & {TV-CG}    & {TV-MD}
    & {TV-BC} &  {TV-ET}   & {TV-CG}    & {TV-MD}  \\
   \hline
    Uniform & 30dB   & 1.05  & 4.42  & 5.47  & 5.44  & 0.44  & 3.07  & 5.52  & 5.26 \\
    Out-of-focus & 30dB   & 1.41  & 5.18  & 5.68  & 5.66  & 0.70  & 4.32  & 6.02  & 5.83 \\
    Linear motion & 30dB   & 2.09  & 6.83  & 8.26  & 8.24  & 1.17  & 5.88  & 7.85  & 7.54 \\
    Gaussian & 30dB   & 1.63  & 3.25  & 3.21  & 3.21  & 0.82  & 3.31  & 3.08  & 3.03 \\
    \hline
    \multicolumn{1}{r}{Average for} & 30dB  & 1.54 & 4.92 & 5.65 & 5.64 & 0.78 & 4.14 & 5.61 & 5.42\\
   \hline\hline
   Uniform & 40dB   & 1.04  & 4.89  & 7.07  & 7.02  & 0.44  & 3.13  & 7.18  & 7.05 \\
    Out-of-focus & 40dB   & 1.40  & 6.60  & 8.34  & 8.34  & 0.70  & 4.62  & 8.25  & 7.94 \\
     Linear motion & 40dB    & 2.07  & 8.30  & 12.57 & 12.41 & 1.16  & 6.31  & 11.24 & 11.08 \\
    Gaussian & 40dB   & 1.61  & 4.01  & 4.03  & 4.03  & 0.80  & 3.84  & 3.85  & 3.82 \\
 \hline
   \multicolumn{1}{r}{Average for} & 40dB  & 1.53 & 5.95 & 8.02 & 7.95 & 0.77 & 4.47 & 7.63 & 7.47\\
   \hline\hline
    Uniform & 50dB   & 1.04  & 5.82  & 9.77  & 9.75  & 0.44  & 3.14  & 9.09  & 9.06 \\
    Out-of-focus & 50dB  & 1.40  & 7.35  & 11.85 & 11.78 & 0.70  & 4.67  & 10.92 & 10.80 \\
    Linear motion & 50dB   & 2.06  & 8.64  & 16.87 & 16.67 & 1.17  & 6.37  & 12.33 & 12.59 \\
    Gaussian & 50dB   & 1.61  & 4.44  & 4.77  & 4.78  & 0.79  & 4.00  & 4.48  & 4.47 \\
   \hline
    \multicolumn{1}{r}{Average for} & 50dB  & 1.53 & 6.56 & 10.81 & 10.77 & 0.77 & 4.54 & 9.20 & 9.23\\
   \hline\hline
    Uniform & 60dB    & 1.04  & 6.24  & 11.59 & 11.95 & 0.44  & 3.14  & 9.84  & 10.41 \\
    Out-of-focus & 60dB   & 1.40  & 7.51  & 14.68 & 14.89 & 0.70  & 4.67  & 12.82 & 13.27 \\
    Linear motion & 60dB   & 2.07  & 8.69  & 20.03 & 19.88 & 1.16  & 6.38  & 13.71 & 13.56 \\
    Gaussian & 60dB   & 1.53  & 4.67  & 4.91  & 4.97  & 0.80  & 4.02  & 4.60  & 4.70 \\
    \hline
    \multicolumn{1}{r}{Average for} & 60dB  & 1.53 & 6.78 & 12.80 & 12.92 & 0.77 & 4.55 & 10.24 & 10.48\\
   \hline\hline
    \multicolumn{1}{r}{\bf Global average} &  & {\bf  1.53} & {\bf 6.05} & {\bf 9.32} & {\bf 9.32} &{\bf  0.77} & {\bf 4.43} & {\bf 8.17} & {\bf 8.15}\\
   \hline
    \end{tabular}%
  \label{tab:TV_ISNR}%
\end{table*}%

\begin{table*}[htbp]
  \centering
  \caption{ISNR values (in {\rm dB}) obtained by the four algorithms tested (see the acronyms in the text)
  with the frame-based analysis formulation of image deblurring. }
    \begin{tabular}{lr|cccc|cccc|}
   	&		 & \multicolumn{4}{c|}{\bf Cameraman}  & \multicolumn{4}{c|}{\bf Lena} \\
   \multicolumn{1}{c}{\bf Blur} & {\bf BSNR}     & {FA-BC} &  {FA-ET}   & {FA-CG}    & {FA-MD}
    & {FA-BC} &  {FA-ET}   & {FA-CG}    & {FA-MD}    \\
    \hline
    Uniform & 30dB   & -0.32 & 4.48  & 5.27  & 5.27  & -2.54 & 2.96  & 5.07  & 5.07  \\ 
    Out-of-focus & 30dB    & 0.78  & 5.17  & 5.53  & 5.53  & -1.50 & 4.33  & 5.51  & 5.50 \\
    Linear motion & 30dB    & 0.91  & 7.27  & 8.11  & 8.12  & -1.85 & 6.18  & 7.55  & 7.50 \\
    Gaussian & 30dB    & 1.11  & 2.98  & 2.93  & 2.92  & -0.32 & 2.90  & 2.62  & 2.63 \\
      \hline
    \multicolumn{1}{r}{Average for} & 30dB  & 0.62 & 4.98 & 5.46 & 5.46 & -1.55 & 4.09 & 5.19 & 5.18\\
   \hline\hline
    Uniform & 40dB      & -0.33 & 5.09  & 7.19  & 7.17  & -2.54 & 3.05  & 6.91  & 6.83 \\
    Out-of-focus & 40dB    & 0.78  & 7.33  & 8.51  & 8.51  & -1.50 & 4.88  & 7.95  & 7.95 \\
    Linear motion & 40dB    & 0.91  & 9.38  & 12.58 & 12.59 & -1.84 & 7.12  & 11.29 & 11.22 \\
    Gaussian & 40dB    & 1.10  & 3.79  & 3.79  & 3.80  & -0.33 & 3.56  & 3.48  & 3.48 \\
     \hline
   \multicolumn{1}{r}{Average for} & 40dB  & 0.62  & 6.40  & 8.02  & 8.02  & -1.55   & 4.65  & 7.41  & 7.37  \\
   \hline\hline
    Uniform & 50dB     & -0.34 & 6.06  & 10.00 & 9.99  & -2.53 & 3.06  & 8.95  & 9.02 \\
    Out-of-focus & 50dB    & 0.79  & 8.62  & 12.10 & 12.10 & -1.50 & 5.02  & 11.03 & 10.99 \\
    Linear motion & 50dB    & 0.91  & 9.86  & 16.89 & 16.82 & -1.84 & 7.29  & 14.70 & 14.63 \\
    Gaussian & 50dB    & 1.10  & 4.49  & 4.67  & 4.67  & -0.33 & 3.87  & 4.27  & 4.21 \\
 \hline
    \multicolumn{1}{r}{Average for} & 50dB  & 0.62 & 7.26  & 10.91  & 10.90  &  -1.55 & 4.81  & 9.74  & 9.71 \\
   \hline\hline
    Uniform & 60dB      & -0.33 & 6.59  & 12.32 & 12.52 & -2.52 & 3.06  & 10.21 & 10.63 \\
    Out-of-focus & 60dB      & 0.78  & 8.88  & 15.79 & 15.77 & -1.50 & 5.04  & 14.15 & 14.21 \\
    Linear motion & 60dB    & 0.92  & 9.92  & 20.37 & 20.34 & -1.84 & 7.31  & 16.31 & 16.41 \\
    Gaussian & 60dB      & 1.10  & 4.76  & 4.99  & 5.01  & -0.33 & 3.94  & 4.61  & 4.59 \\
\hline
    \multicolumn{1}{r}{Average for} & 60dB  & 0.62 & 7.54 & 13.37 & 13.41 & 0.77 & 4.84 & 11.32 & 11.46\\
   \hline\hline
    \multicolumn{1}{r}{\bf Global average} &  & {\bf  0.62} & {\bf 6.55} & {\bf 9.44} & {\bf 9.45} &{\bf  -1.55} & {\bf 4.60} & {\bf 8.41} & {\bf 8.43}\\
   \hline    \end{tabular}%
  \label{tab:WAV_ISNR}%
\end{table*}%

%

 \begin{table*}[htbp]
  \centering
 \caption{Computation times of the four algorithms tested; the times (in seconds) reported in
 each row correspond to averages over the four blurs at the indicated BSNR values.}
    \begin{tabular}{c|cc|cc|cc|cc|}
            & \multicolumn{4}{c|}{\bf Cameraman} & \multicolumn{4}{c|}{\bf Lena} \\
  {\bf BSNR} & TV-MD & TV-CG  & FA-MD & FA-CG  & TV-MD & TV-CG  & FA-MD & FA-CG  \\
  \hline
    30dB   & 1.32  & 2.62  &  1.87  & 3.05  &  2.65  & 3.69  & 3.08  & 4.03  \\
    40dB   & 1.52  & 2.96  &  2.39  & 3.65  &  2.85  & 4.13  & 3.60  & 5.04  \\
    50dB    & 2.28  & 4.20  &  4.70  & 7.38  &  3.32  & 5.18  & 5.97  & 8.76  \\
    60dB    & 8.63  & 16.54 &  16.25 & 26.87 &  11.02 & 21.10 & 16.28 & 25.77 \\
  \hline
   {\bf Average} & {\bf 3.44}  & {\bf 6.58}  & {\bf 6.30}  & {\bf 10.24} & {\bf 4.96}  & {\bf 8.53}   & {\bf 7.23}  & {\bf 10.90}  \\
   \hline
    \end{tabular}%
  \label{tab:total_time}
\end{table*}

Table~\ref{tab:total_time} shows the average times\footnote{The methods were implemented in MATLAB
and the experiments run on a Intel Core i5 processor at 2.39GHz.} taken by the four algorithms tested
at each of the BSNR values considered.
To obtain these results, the MD variant of each algorithm is first run using its own
convergence criterion (relative variation of the objective function falling below $10^{-4}$),
and then the CG variant is run until it reaches the same value of the
objective function. These results show that the MD versions are faster
than their CG-based counterparts by factors that range from (roughly) from 1.5 to 2.
This speed-up is confirmed by the average time per iteration of each of the four algorithms, shown in
Table~\ref{tab:time_iter}; each iteration of the MD variant is roughly 1.6  to 1.8 faster than the
CG counterpart.
Notice that, in addition to converging faster, the MD approach is also considerably simpler
to implement, since it does not involve the inner CG iterations. Since, as shown above,
both methods yield similar ISNR values, this allows concluding that, in the context of
ADMM-based deblurring algorithms, the mask decoupling approach is preferable to the
use of the Reeves technique \cite{Reeves05} with a CG-based inner step.

\begin{table}[bht]
  \centering
  \caption{Average time per iteration (in seconds) of each of the four algorithms considered.}
    \begin{tabular}{l|c|c|c|c}
        {\bf\small Algorithm} &  TV-MD    & TV-CG  & FA-MD    & FA-CG \\
\hline
{\bf\small Time per iteration} &  0.033 & 0.058 &   0.059 & 0.096  \\
    \end{tabular}%
  \label{tab:time_iter}%
\end{table}%

\begin{figure}[h]
	\centering
	 \begin{tabular}{cc}
   \includegraphics[width=0.16\textheight]{x0_TV_camera256_motion_19x19__40dB.eps} &
   \includegraphics[width=0.16\textheight]{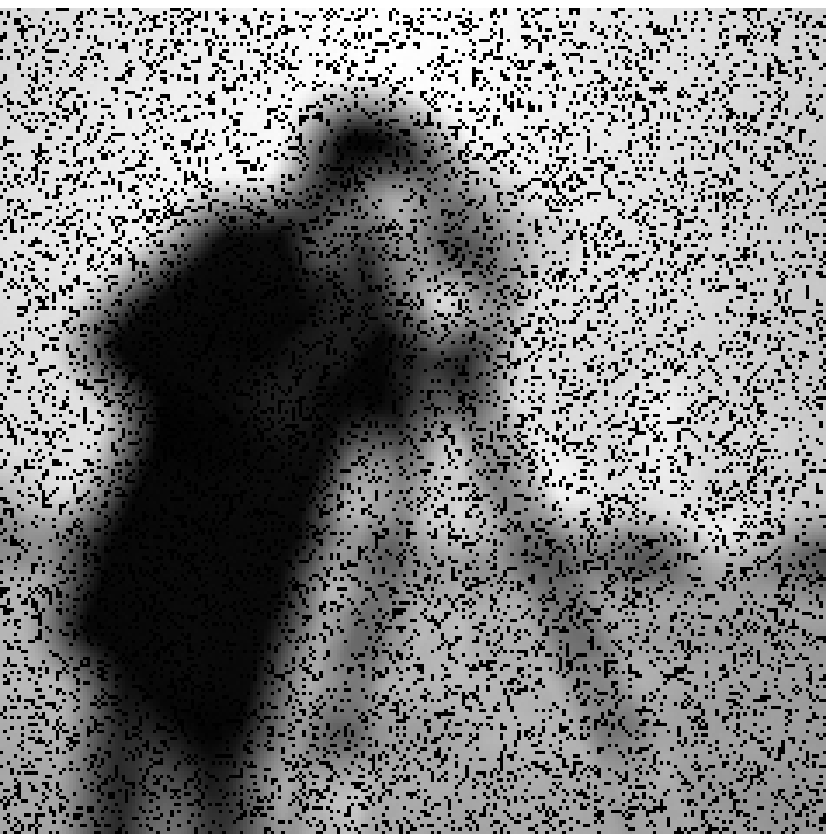} \\
   \vspace{0.15cm} {\small original  ($256\times 256$)}&  {\small observed ($238\times 238$)}\\
  \includegraphics[width=0.16\textheight]{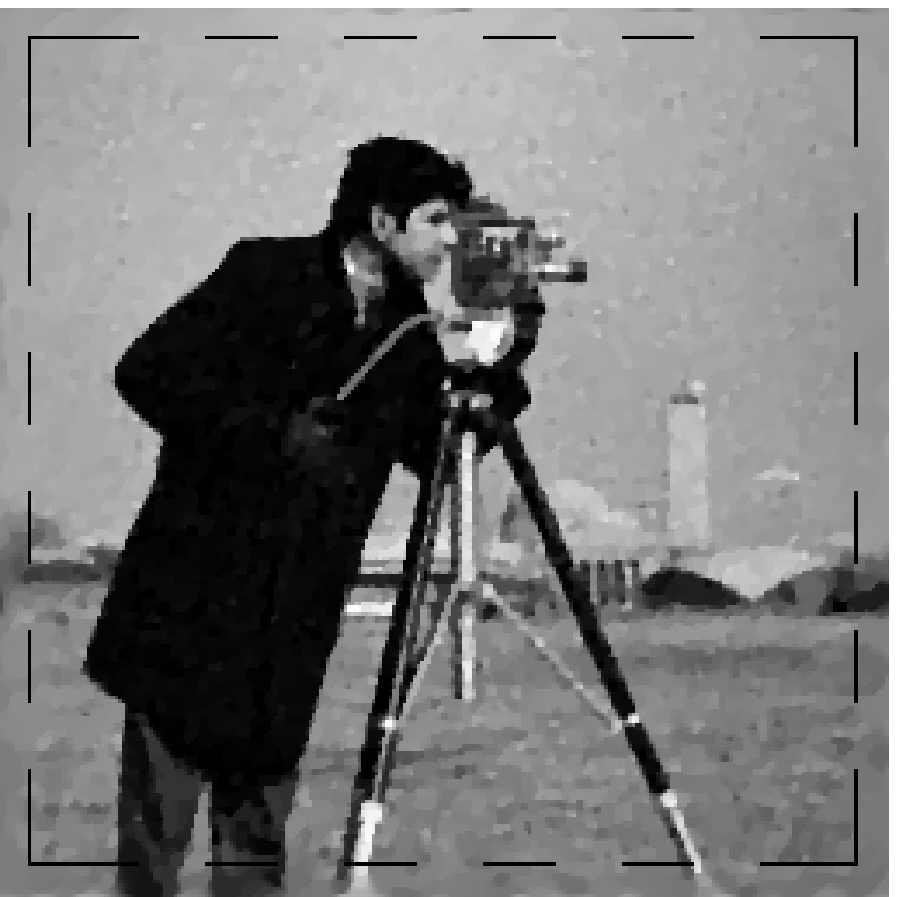}&
    \includegraphics[width=0.16\textheight]{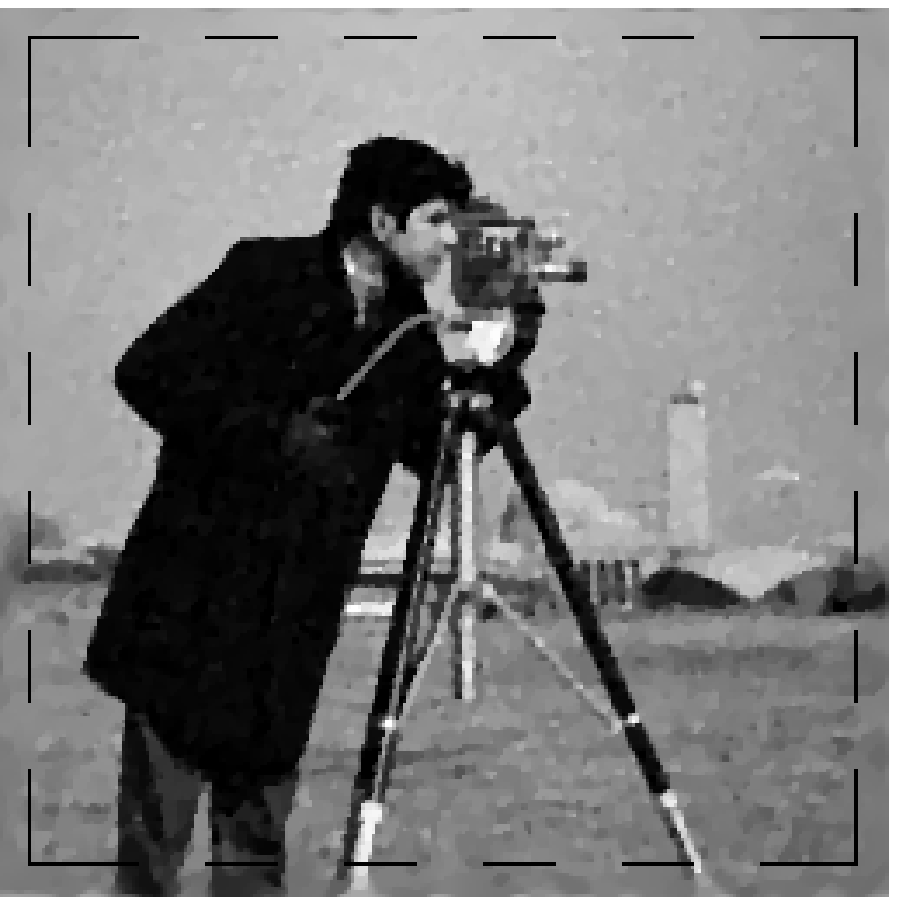} \\
   \vspace{0.25cm}  {\small FA-CG (SNR = 20.58dB)} & {\small  FA-MD (SNR = 20.57dB)}
   	 \end{tabular}
\caption{Illustration on the use of the proposed method for simultaneous deblurring and
inpainting. The observed image suffered a $19\times 19$ uniform blurring, followed by
the loss of 20\% of (randomly located) pixels. }
\label{fig:inpainting}
\end{figure}

Finally, we illustrate the successful application of the proposed
TV-MD and TV-CG approaches in simultaneous non-periodic deblurring and inpainting (the FA-MD
and FA-CG algorithms yield very similar results).
In this case, the methods that assume periodic BC (with or
without the edgetaper) simply cannot be used. Figure~\ref{fig:inpainting}
shows the results obtained with the ``cameraman'' image, non-periodically
blurred  with a $19 \times 19$ uniform blur (using the same boundary
conditions as in the previous experiments), at 40dB BSNR, and
with $20\%$ missing pixels. Of course, these results are
not a full experimental assessment of the effectiveness of the
proposed algorithms for image inpainting, which will be the addressed
in future work.

%

\section{Conclusions and Future Work}
\label{sec:Conc}

We have presented a new strategy to extend recent fast image deconvolution
algorithms, based on the {\it alternating direction method of multipliers} (ADMM),
to problems with unknown boundary conditions; previous versions of this
class of methods were limited to deconvolution problems with periodic
boundary conditions. In fact, the proposed algorithms are able to address a more general
class of problems, where the degradation model includes not only a convolution with
some blur filter, but also loss of pixels (the so-called image inpainting problem).
We have considered total-variation regularization as well as frame-based analysis and
synthesis formulations, and gave convergence guarantees for the algorithms proposed.
Experiments using large blur filters (where the effect of the unknown
boundaries is more evident) and several noise levels showed the adequacy
of the proposed approach.

Ongoing and future work includes the application and extension of the approach
herein developed to: video deblurring; image and video super-resolution; spatially
varying regularization. Another direction of current research concerns the application
of the proposed approach in iterative blind deconvolution \cite{Almeida_Figueiredo_BID_ADMM_2013}.



\end{document}